\newtheorem{theorem}{Theorem}
\theoremstyle{plain}
\newtheorem{lemma}[theorem]{Lemma}
\newtheorem{definition}[theorem]{Definition}
\newtheorem{proposition}[theorem]{Proposition}
\newtheorem{corollary}[theorem]{Corollary}
\newtheorem{remark}[theorem]{Remark}
\numberwithin{equation}{section}
\numberwithin{theorem}{section}
\newcommand{\cE}{\mathcal{E}}
\newcommand{\cL}{\mathcal{L}}
\newcommand{\cF}{\mathcal{F}}
\newcommand{\E}{\mathbb{E}}
\newcommand{\R}{\mathbb{R}}
\newcommand{\F}{\mathbb{F}}
\newcommand{\N}{\mathbb{N}}
\newcommand{\bP}{\mathbb{P}} % \P is used by the system
\def\e{\varepsilon}
\def\Ito{It\^{o} }
\begin{document}
%%%%%%%%%%%%%%%%%%%%%%%%%%%%%%%%%%%%%%%%%%%%%%%%%%%
\title
{Observability inequalities for the backward stochastic evolution equations and their applications}
%%%%%%%%%%%%%%%%%%%%%%%%%%%%%%%%%%%%%%%%%%%%%%%%%%%

\author{\sffamily Yuanhang Liu$^{1}$, Weijia Wu$^{1}$, Donghui Yang$^1$, Jie Zhong$^{2,*}$   \\
	{\sffamily\small $^1$ School of Mathematics and Statistics, Central South University, Changsha 410083, China. }\\
 {\sffamily\small $^2$ Department of Mathematics, California State University Los Angeles, Los Angeles, 90032, USA}
}
	\footnotetext[2]{Corresponding author: jiezhongmath@gmail.com }

\email{liuyuanhang97@163.com}
\email{weijiawu@yeah.net}
\email{donghyang@outlook.com}
\email{jiezhongmath@gmail.com}

\keywords{Observability inequality, Stochastic fourth order parabolic equations, Stochastic degenerate parabolic equations, Stochastic heat equations.}
\subjclass[2020]{93B05, 93B07}

\maketitle

\begin{abstract}
The present article delves into the investigation of observability inequalities pertaining to backward stochastic evolution equations. We employ a combination of spectral inequalities, interpolation inequalities, and the telegraph series method as our primary tools to directly establish observability inequalities. Furthermore, we explore three specific equations as application examples: a stochastic degenerate equation, a stochastic fourth order parabolic equation and a stochastic heat equation. It is noteworthy that these equations can be rendered null controllability with only one control in the drift term to each system.
\end{abstract}

\pagestyle{myheadings}
\thispagestyle{plain}
\markboth{OBSERVABILITY INEQUALITIES }{YUANHANG LIU, WEIJIA WU, DONGHUI YANG, AND JIE ZHONG}

%%%%%%%%%%%%%%%%%%%%%%%%%%%%%%%%%%%%%%%%%%%%%%%%%%%%
%%%%%%%%%%%%%%%%%%%%%%%%%%%%%%%%%%%%%%%%%%%%%%%%%%%%

\section{Introduction}
Observability inequalities play a pivotal and consequential role in exploring the stability and controllability aspects of evolution equations. These equations encompass both degenerate and non-degenerate scenarios, and a substantial body of research has been devoted to investigating observability inequalities associated with them. For instance, the degenerate parabolic equation finds application in various physical phenomena, including laminar flow, large-scale ice formations, solar radiation-climate interactions, and population genetics (for more detailed descriptions, refer to \cite{cannarsa2016global}). Consequently, control problems related to such equations have garnered significant attention, with studies on controllability and observability issues for certain degenerate parabolic equations in \cite{cannarsa2016global} and its extensive references. In \cite{cannarsa2008carleman}, the authors established the observability inequality for a class of degenerate parabolic problems, wherein the control operates on open sets. This was achieved through the utilization of Carleman estimates, employing appropriately chosen weighted functions and Hardy-type inequalities. In \cite{fragnelli2016interior}, the author investigated non-smooth, general degenerate parabolic equations in non-divergence form. The study considered scenarios involving the control on open sets, as well as instances of degeneracy and singularity within the spatial domain's interior, while incorporating Dirichlet or Neumann boundary conditions. Notably, the authors derived the observability inequality for the corresponding adjoint problem using Carleman estimates. In \cite{boutaayamou2018carleman}, the authors addressed a parabolic problem characterized by degeneracy within the spatial domain's interior. It is noted that Neumann boundary conditions were incorporated alongside the control acting upon open sets. The study primarily concentrated on two key aspects: the well-posedness of the problem itself and the formulation of Carleman estimates for the corresponding adjoint problem. As a result of their investigations, some observability inequalities were derived. For further exploration of controllability matters concerning degenerate parabolic equations, see \cite{cannarsa2005null,cannarsa2019null,du2014approximate,floridia2014approximate} and references therein. In contrast, focusing on non-degenerate equations, the study presented in \cite{zhou2012observability} delved into the realm of observability and null controllability pertaining to a specific class of one-dimensional fourth-order parabolic equations. Employing the formulation of global Carleman estimates, the author successfully derived observability inequalities for linear fourth-order parabolic equations with potentials in the one-dimensional context. In \cite{guerrero2019carleman}, the authors investigated a fourth-order parabolic equation within a bounded and smooth domain $\Omega$, where the dimension $N\geq2$. The equation was subject to homogeneous Dirichlet boundary conditions applied to both the solution and the Laplacian of the solution. An interesting study was the establishment of a Carleman inequality, enabling observation within an arbitrarily small open set $\omega$ contained within $\Omega$. This result, in turn, led to a null controllability conclusion at any given time $T > 0$ for the associated control system, with the control function acting through $\omega$.
In \cite{apraiz2014observability,phung2013observability}, the authors establish the observability inequality of the heat equation for the measurable subsets, and show the null
controllability with controls restricted over these sets.

Note that the previous research mentioned focused on deterministic equations. However, in practice, the consideration of stochastic effects requires replacing deterministic functions with stochastic processes as mathematical descriptions, leading to the formation of stochastic degenerate and non-degenerate parabolic equations. In \cite{wu2020carleman}, the authors successfully derived Carleman estimates for a backward stochastic parabolic equation characterized by weak degeneracy and a singular weight function. By combining this Carleman estimate with an approximation argument, they were able to establish the null controllability of the forward stochastic parabolic equation with weak degeneracy, involving two control functions. In \cite{baroun2022carleman}, the authors addressed the issue of null controllability concerning a specific category of stochastic degenerate parabolic equations. Initially, they established a comprehensive global Carleman estimate for a linear forward stochastic degenerate equation incorporating multiplicative noise. Leveraging this estimate, they successfully demonstrated the null controllability of the corresponding backward equation, while also achieving a partial result regarding the controllability of the forward equation. Moreover, by introducing a novel Carleman estimate for the backward equation, employing a weighted function that does not vanish at time $t = 0$, and utilizing the duality method HUM, they were able to establish the null controllability of a forward stochastic degenerate equation involving the influence of two control functions. For other stochastic degenerate parabolic equations, we refer readers to the works in \cite{liu2019carleman}. For stochastic non-degenerate parabolic equations, in \cite{lv2022null}, the authors successfully established the null controllability of linear fourth-order stochastic parabolic equations, incorporating two control functions. By employing the duality argument and a global Carleman estimate, they reduced the problem of null controllability to the issue of observability for backward fourth-order stochastic parabolic equations. In \cite{tang2009null}, the authors addressed the problem of null controllability for both forward and backward linear stochastic parabolic equations, incorporating two control functions. To establish the duality argument, they derived observability estimates for linear backward and forward stochastic parabolic equations with general coefficients. Again, these estimates were obtained through the utilization of a global Carleman estimate. Regarding the controllability and optimal control problems for other stochastic parabolic equations, please refer to reference  \cite{liu2023norm,yang2016observability}.

One of the main contributions of this paper is the direct derivation of the observability inequality for backward stochastic evolution equations, which motivated by \cite{wang2017observability}, where the authors studied the observability from measurable sets in time for an evolution equation and used the method developed by \cite{apraiz2014observability,phung2013observability} to obtain an interpolation inequality at one time point and derive the desired observability inequality. The main tool employed here is denoted as a spectral-like condition, which is different than the techniques of Carleman estimates. As some important applications, we discuss the observability from measurable sets for a stochastic degenerate equation, a stochastic fourth order parabolic equation and a stochastic heat equation, respectively. It is noteworthy that, when dealing with stochastic degenerate equations, the research process becomes inherently more intricate compared to two other non-degenerate equations. More specifically,  the equation's degeneracy introduces challenges when studying the properties of analyticity; see Section \ref{subsec:degenerate} for more details. To surmount this difficulty, we draw insights from the concepts put forth in \cite{liu2023observability}, allowing us to establish observability inequalities for stochastic degenerate equations. In these specific applications, as compared to existing literatures \cite{wu2020carleman,lv2022null,baroun2022carleman}, we demonstrate the effectiveness of employing one control in the drift term from a reduced control space. Leveraging the outcomes of spectral inequality and  interpolation inequality, we directly derive observability inequalities for these equations from measurable sets, consequently yielding null controllability results.

The rest of this paper are structured as follows. In Section 2, we prove our main
result for an abstract equation, i.e., Theorem \ref{thm:main}. In Section 3, we present some specific examples. More specifically, we discuss the observabilities from measurable sets for a stochastic degenerate equation, a stochastic fourth order parabolic equation and a stochastic heat equation in Section 3.1, Section 3.2 and Section 3.3, respectively.

\section{An abstract equation}

At first, let us introduce necessary notations.

Let $(\Omega,\mathcal{F},\left\{\mathcal{F}_t\right\}_{t\geq 0}, \bP)$ be a fixed complete filtered probability space, on which a one dimensional standard Brownian motion $\{W(t)\}_{t\geq0}$ is defined, and $\left\{\mathcal{F}_t\right\}_{t\geq 0}$ is the corresponding natural filtration, augmented by all the $\bP$-null sets in $\mathcal{F}$. We denote by $\mathbb{F}$ the progressive $\sigma$-field w.r.t. $\left\{\mathcal{F}_t\right\}_{t\geq 0}$.

Let $H$ and $U$ be  two separable  Hilbert spaces with inner products $\langle\cdot,\cdot\rangle_H$ and $\langle\cdot,\cdot\rangle_U$;
and norms $\|\cdot\|_H$ and $\|\cdot\|_U$, respectively. Fix $t \ge 0, p \in [1,\infty)$, we denote by $L^p_{\mathcal{F}_t}(\Omega;H)$  the Banach space consisting of all $H$-valued,  $\mathcal{F}_t$ measurable random
variables $X(t)$ endowed with the norm
$$
\|X(t)\|_{L^p_{\mathcal{F}_t}(\Omega;H)}=\bigg(\mathbb{E}\|X(t)\|^p_{H}\bigg)^{\frac{1}{p}}.
$$
Denote by $L^p_{\mathbb{F}}(0,T;L^q(\Omega;H))$, $p,q\in[1,\infty)$, the Banach space consisting of all $H$-valued, $\left\{\mathcal{F}_t\right\}_{t\geq 0}$-adapted processes $X$ endowed with the norm
$$
\|X(\cdot)\|_{L^p_{\mathbb{F}}(0,T;L^q(\Omega;H))}=\bigg(\int_0^T(\mathbb{E}\|X(t)\|^q_H)^{\frac{p}{q}}dt\bigg)^{\frac{1}{p}}.
$$
Denote by $L^p_{\mathbb{F}}(\Omega; L^q(0,T;H))$, $p,q\in[1,\infty)$, the Banach space consisting of all $H$-valued, $\left\{\mathcal{F}_t\right\}_{t\geq 0}$-adapted processes $X$ endowed with the norm
$$
\|X(\cdot)\|_{L^p_{\mathbb{F}}(\Omega; L^q(0,T;H))}=\bigg[\mathbb{E}\bigg(\int_0^T\|X(t)\|^q_H dt\bigg)^{\frac{p}{q}}\bigg]^{\frac{1}{p}}.
$$
Denote by $L^\infty_{\mathbb{F}}(0,T;\R)$, the Banach space consisting of all $\R$-valued, $\left\{\mathcal{F}_t\right\}_{t\geq 0}$-adapted bounded processes, with the essential supremum norm.\\
Denote by $L^q_{\mathbb{F}}(\Omega;C([0,T];H))$, $q\in[1,\infty)$, the Banach space consisting of all $H$-valued, $\left\{\mathcal{F}_t\right\}_{t\geq 0}$-adapted continuous processes $X$ endowed with the norm
$$
\|X(\cdot)\|_{L^q_{\mathbb{F}}(\Omega;C([0,T];H))}=\bigg(\mathbb{E}\|X(\cdot)\|^q_{C([0;T];H)}\bigg)^{\frac{1}{q}}.
$$
%Denote by $\cL_2(V;H)$ the space of all Hilbert-Schmidt operators from a separable Hilbert space $V$ to $H$, where an operator $F \in \cL(V ;H)$ is said to be a Hilbert-Schmidt operator if $\sum_{i=1}^\infty |Fe_i|^2_H<\infty$, here $\{e_i\}_{i=1}^\infty$ is the orthonormal basis on $V$. When $V$ and $H$ are clear from the context, we
%simply write $\cL^0_2=\cL_2(V;H)$. For more discussions about the space $\cL^0_2$, see e.g., page 92 in \cite{lv2021mathematical}.

Throughout this paper, we denote by $|\cdot|$ the Lebesgue measure on $\R^n,n\geq1$. In the sequel, we shall simply denote $L^p_{\mathbb{F}}(0,T;L^p(\Omega;H))\equiv L^p_{\mathbb{F}}(\Omega; L^p(0,T;H))$ by $L^p_{\mathbb{F}}(0,T;H)$ with $p\in[1,\infty)$.

Consider the following forward controlled stochastic evolution equation:
\begin{equation}
\label{eq:main-for}
\left\{
\begin{array}{ll}
dy = A ydt + \chi_EBudt + F(t)y dW(t), & t\in(0,T),   \\[2mm]
y(0) =y_0,
\end{array}
\right.
\end{equation}
where $y_0\in L^2_{\cF_0}(\Omega;H)$, the operator $A$ generates a $C_0$-semigroup $\{S(t)\}_{t\geq0}$ on $H$ and $F\in L_{\F}^\infty(0,T;\R)$ is a given function, the control variable $u\in L^\infty_\F(0,T;L^2(\Omega; U))$, $E$ is a measurable subset of $[0, T ]$ with positive measure and denote by $\chi_E$ the characteristic function of $E$, the observation operator $B\in \cL(U,H)$ and denote by $B^*\in \cL(H,U)$ the adjoint operator of $B$.

Before we state our main theorem, let us introduce the following backward stochastic evolution equation corresponding to system \eqref{eq:main-for}:
\begin{equation}
\label{eq:main}
\left\{
\begin{array}{ll}
d z = -A zdt - F(t)Z dt + Z dW(t), & t\in(0,T),   \\[2mm]
z(T) =\eta,
\end{array}
\right.
\end{equation}
where $\eta\in L^2_{\mathcal{F}_T}(\Omega;H)$. 
We denote by $(z(\cdot;T,\eta), Z(\cdot;T,\eta))$ the solution of equation
\eqref{eq:main} given the terminal condition $\eta=z(T)$.
%In the sequel, we will need the following assumption on the nonlinearity in equation (\ref{eq:main}):

%$(A_1)$: $F(\cdot,z,Z)$ is $\F$-adapted for each $(z,Z)\in H\times \cL_2^0$;

%$(A_2)$: There exist two nonnegative functions $L_1(\cdot),L_2(\cdot)\in L^\infty(0,T)$ such that for any $z_1,z_2\in H$, $Z_1,Z_2\in \cL^0_2$, for $a.e.\, t\in[0,T]$,
%$$
%\|F(t,z_1,Z_1)-F(t,z_2,Z_2)\|_H\leq L_1(t)\|z_1-z_2\|_H+L_2(t)\|Z_1-Z_2\|_{\cL^0_2}, \,a.s.
%$$

%$(A_3)$: $F(\cdot,0,0)\in L^1_{\mathbb{F}}(0,T;L^2(\Omega;H))$.

By the classical well-posedness result for backward stochastic evolution equations, see e.g., Theorem 4.10 in \cite{lv2021mathematical},  we know that equation (\ref{eq:main}) admits a unique solution $(z,Z)\in L^2_{\mathbb{F}}(\Omega;C([0,T];H)) \times L^2_{\mathbb{F}}(0,T;H)$.
%\begin{remark}
%When considering the well-posedness outcomes of the equation (\ref{eq:main}), it is possible to weaken assumption $(A_2)$ to assumption $(A_2')$:

%$(A_2')$ There exist two nonnegative functions $L_1(\cdot)\in L^1(0,T)$ and $L_2(\cdot)\in L^2(0,T)$ such that for any $z_1,z_2\in H$, $Z_1,Z_2\in \cL^0_2$, for $a.e.\, t\in[0,T]$,
%$$
%\|F(t,z_1,Z_1)-F(t,z_2,Z_2)\|_H\leq L_1(t)\|z_1-z_2\|_H+L_2(t)\|Z_1-Z_2\|_{\cL^0_2}, \,a.s.
%$$

%Nonetheless, during the proof of the Lemma \ref{decay} (to be formulated later), it becomes necessary to assume assumption $(A_2)$ in order to execute specific technical maneuvers.
%\end{remark}
We write
\[
0<\lambda_1\le \lambda_2\le \cdots
\]
for the eigenvalues of $-A$, and $\{e_j\}_{j\ge 1}$ for corresponding eigenfunctions which form an orthonormal basis for $H$. For each $\lambda>0$, we define
\[
\cE_\lambda f = \sum_{\lambda_j\le\lambda} \langle f,e_j\rangle_He_j,~\text{and}~
\cE^\perp_\lambda f = \sum_{\lambda_j>\lambda} \langle f,e_j\rangle_He_j, ~\text{for each}~f \in H.
\]
Consequently, we have
\[
  f = \cE_\lambda f + \cE_\lambda^\perp f.
\]

Next, we introduce the following spectral-like condition ${\bf(H)}$:

${\bf(H)}$: There are constants $\gamma\in(0,1)$ and $N>0$ such that for any $\lambda>0$,
$$
\|\cE_\lambda f\|_H\leq Ne^{N\lambda^{\gamma}}\|B^*\cE_\lambda f\|_U,~\text{for all}~f\in H.
$$

The following is our main result.
\begin{theorem}\label{thm:main}
Given $T>0$. Suppose the condition ${\bf(H)}$ holds.  Then there exists a positive constant $C=C(|E|,T,N,\gamma)$ such that the solution $z$ to the equation \eqref{eq:main} satisfies the following observability inequality
\begin{equation}\label{thm:main-ob}
\|z(0;T,\eta)\|_{L^2_{\mathcal{F}_0}(\Omega;H)} \le C \|\chi_EB^*z(\cdot;T,\eta)\|_{L^1_\F(0,T;L^2(\Omega; U))}.
\end{equation}
\end{theorem}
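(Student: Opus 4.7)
My plan is to follow the three-step ``telegraph series'' strategy from \cite{wang2017observability,phung2013observability,apraiz2014observability}, adapted to the backward stochastic equation \eqref{eq:main}: (i) an It\^o-based dissipation estimate for the high-frequency projection $\cE_\lambda^\perp z$, (ii) an interpolation inequality at one time point combining (H) with (i), and (iii) a Lebesgue-density-point iteration that converts the interpolation into observability from $E$.

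For step (i), I apply $\cE_\lambda^\perp$ to \eqref{eq:main} and use It\^o's formula on $\|\cE_\lambda^\perp z(t)\|_H^2$, taking expectations to kill the martingale term. The resulting identity contains $-2\E\langle \cE_\lambda^\perp z, A\cE_\lambda^\perp z\rangle_H \geq 2\lambda\,\E\|\cE_\lambda^\perp z\|_H^2$ (spectral gap), a cross term $-2F(t)\E\langle \cE_\lambda^\perp z,\cE_\lambda^\perp Z\rangle_H$ that Young's inequality absorbs at the cost of $-\|F\|_\infty^2\E\|\cE_\lambda^\perp z\|_H^2-\E\|\cE_\lambda^\perp Z\|_H^2$, and the It\^o correction $+\E\|\cE_\lambda^\perp Z\|_H^2$, which cancels the loss in $Z$. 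For $\lambda$ large, Gronwall yields
\[
\E\|\cE_\lambda^\perp z(t_1)\|_H^2 \le e^{-(2\lambda-\|F\|_\infty^2)(t_2-t_1)}\,\E\|\cE_\lambda^\perp z(t_2)\|_H^2,\qquad 0\le t_1<t_2\le T.
\]
The same computation on $\|z\|_H^2$ (without projection) gives a one-sided backward energy bound $\E\|z(t_1)\|_H^2 \le e^{\|F\|_\infty^2(t_2-t_1)}\E\|z(t_2)\|_H^2$, so the $H$-norm grows only by a bounded factor as we run backwards in time.

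For step (ii), I split $z(t)=\cE_\lambda z(t)+\cE_\lambda^\perp z(t)$ and apply (H) together with $\|B^*\cE_\lambda^\perp z\|_U\le\|B^*\|_{\cL(H,U)}\|\cE_\lambda^\perp z\|_H$ to obtain $\|\cE_\lambda z(t)\|_H\le Ne^{N\lambda^\gamma}\|B^*z(t)\|_U + Ce^{N\lambda^\gamma}\|\cE_\lambda^\perp z(t)\|_H$. Squaring, taking expectation and feeding in the decay from (i) produces the interpolation
\[
\E\|z(t_1)\|_H^2 \le C_1 e^{C_1\lambda^\gamma}\E\|B^*z(t_1)\|_U^2 + C_1 e^{-c_1\lambda(t_2-t_1)}\E\|z(t_2)\|_H^2,
\]
valid for all $0\le t_1<t_2\le T$ and $\lambda$ sufficiently large.

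For step (iii), I pick a density point $l\in(0,T)$ of $E$ and a geometric sequence $l_m\downarrow l$ with $|E\cap(l_{m+1},l_m)|\ge\rho\,(l_m-l_{m+1})$ for some $\rho>0$. For each $t\in E\cap(l_{m+1},l_m)$ I apply step (ii) with $t_1=t$, $t_2=l_{m-1}$ and an optimized $\lambda_m\sim(l_{m-1}-l_{m+1})^{-1/(1-\gamma)}$ chosen so that $e^{C_1\lambda_m^\gamma}\cdot e^{-c_1\lambda_m(l_{m-1}-l_{m+1})}$ tends to zero geometrically; this balancing is only possible because $\gamma<1$ in (H), so this is the crucial analytic input. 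Averaging in $t$ over $E\cap(l_{m+1},l_m)$, exploiting the backward monotonicity from (i) to pass from $t$ to $l_{m+1}$ on the left, and using Cauchy--Schwarz to convert the squared integrand into the $L^1_\F(0,T;L^2(\Omega;U))$ form on the right, yields a recursion
\[
a_{m+1}\le K_m\Bigl(\int_{E\cap(l_{m+1},l_m)}\|B^*z(t)\|_{L^2_{\cF_t}(\Omega;U)}\,dt\Bigr)^2 + q_m\,a_{m-1},
\]
with $a_m:=\E\|z(l_m)\|_H^2$, where the residual factors $q_m$ decay fast and the observability weights $K_m$ telescope. Iterating and closing with $\E\|z(0)\|_H^2\le e^{\|F\|_\infty^2 l}a_m$ from step (i) delivers \eqref{thm:main-ob}. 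The main obstacle is precisely this bookkeeping in step (iii): one must simultaneously tune $\lambda_m$ against the shrinking interval widths, ensure the $q_m$-tail converges (forcing $\sum q_m<\infty$), and keep the sum of the $K_m$-weighted integrals bounded by the global $L^1$-norm on $E$; everything hinges on the subexponential growth $e^{N\lambda^\gamma}$ with $\gamma<1$ in (H).
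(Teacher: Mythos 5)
Your proposal follows essentially the same route as the paper: your step (i) is the paper's Lemma \ref{decay} (It\^o's formula plus the spectral gap and completing the square in $Z$), your step (ii) is Proposition \ref{interpolation} in its $\varepsilon$-form \eqref{eq:epsilon inq}, and your step (iii) is the telescoping-series argument in the proof of Theorem \ref{thm:main}, with only cosmetic differences (a decreasing rather than increasing geometric sequence at the Lebesgue density point, and a two-step rather than one-step recursion). The renormalization you gesture at---multiplying the recursion through by a weight of order $e^{-C(l_{n+1}-l_n)^{\gamma/(\gamma-1)}}$ so that the observation integrals carry uniformly bounded coefficients and the high-order remainder telescopes away---is exactly how the paper closes the argument, via the specific choice of ratio $q=\big(\tfrac{C+1/2}{C+1}\big)^{(1-\gamma)/\gamma}$.
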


\begin{remark}
    It is noteworthy that in Section \ref{sec:app}, we present the explicit expression of the observation operator $B$, with the control domain being a measurable set. By leveraging Theorem \ref{thm:main}, we can establish the corresponding observability inequalities for the specific equation over a measurable set in the space-time domain. This is a new result, to  the best of our knowledge, in the stochastic case.
\end{remark}

By
linearity, it is easy to check that
\begin{align}
  & z(t; T,\cE_\lambda \eta) = \sum_{\lambda_j\le \lambda}
  z_j(t;T,\eta_j)e_j = \cE_\lambda z(t;T,\eta);\nonumber\\
& z(t; T,\cE^\bot_\lambda \eta) = \sum_{\lambda_j> \lambda}
  z_j(t;T, \eta_j)e_j = \cE^\bot_\lambda z(t;T,\eta), \label{eq:2}
\end{align}
where $\eta_j=\langle\eta,e_j\rangle_H$ and $(z_j(\cdot;T,\eta_j), Z_j(\cdot;T,\eta_j))$ is the solution of the following backward
stochastic differential equation
\begin{equation}
\label{eq:main-1}
\left\{
\begin{array}{ll}
d z_j = \lambda_j z_jdt + F(t)Z_j(t) dt + Z_j dW(t), & t\in(0,T),   \\[2mm]
z_j(T) =\eta_j.
\end{array}
\right.
\end{equation}
Set
$$
\tau=\|F\|_{L_{\F}^\infty(0,T;\R)}^2.
$$
\begin{lemma}\label{decay}
Given any $\eta$ in the space of  $L^2_{\mathcal{F}_T}(\Omega;H)$,
    we have for each $t\in [0,T]$,
  \begin{equation}
    \label{eq:decay}
    \E \|z(t ;T,\cE_\lambda^\bot \eta)\|_H^2 \le e^{(-2\lambda+\tau)(T-t)} \E \|\eta\|_H^2.
  \end{equation}
\end{lemma}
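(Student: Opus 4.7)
The idea is to diagonalize: since the eigenfunctions $\{e_j\}$ form an orthonormal basis and $-A$ has eigenvalues $\lambda_j$, the decomposition \eqref{eq:2} reduces the estimate to a sum of scalar BSDEs \eqref{eq:main-1}. Concretely, I would write
\[
\E\|z(t;T,\cE_\lambda^\perp\eta)\|_H^2 \;=\; \sum_{\lambda_j>\lambda}\E|z_j(t;T,\eta_j)|^2,
\]
and prove an exponential decay bound for each component separately, namely
\[
\E|z_j(t;T,\eta_j)|^2 \;\le\; e^{(-2\lambda_j+\tau)(T-t)}\E|\eta_j|^2,
\]
which, upon summing over $\lambda_j>\lambda$ and using $-2\lambda_j\le -2\lambda$, yields \eqref{eq:decay} via Parseval.

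\textbf{Main step.} To derive the per-mode bound, I would apply It\^o's formula to $|z_j(s)|^2$ on $[t,T]$. The differential of the square of the scalar BSDE \eqref{eq:main-1} gives
\[
d|z_j|^2 \;=\; \bigl(2\lambda_j|z_j|^2 + 2F(s) z_j Z_j + |Z_j|^2\bigr)\,ds + 2z_j Z_j\, dW(s).
\]
Taking expectation and differentiating the resulting integral equation yields, for $f_j(s):=\E|z_j(s)|^2$,
\[
f_j'(s) \;=\; 2\lambda_j f_j(s) + 2\E[F(s) z_j Z_j] + \E|Z_j(s)|^2.
\]
The cross term involving the unknown martingale integrand $Z_j$ is the only real obstacle; I would dispose of it by Young's inequality with the optimal constant:
\[
2F(s) z_j Z_j \;\ge\; -F(s)^2 |z_j|^2 - |Z_j|^2,
\]
so that the $|Z_j|^2$ terms cancel and the bound $|F(s)|^2\le\tau$ (valid a.s.\ for a.e.\ $s$ by definition of $\tau$) gives
\[
f_j'(s) \;\ge\; (2\lambda_j - \tau)\, f_j(s).
\]

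\textbf{Conclusion.} Multiplying by the integrating factor $e^{-(2\lambda_j-\tau)s}$ shows that $s\mapsto e^{-(2\lambda_j-\tau)s} f_j(s)$ is nondecreasing on $[t,T]$; evaluating at $s=t$ and $s=T$ yields
\[
f_j(t) \;\le\; e^{-(2\lambda_j-\tau)(T-t)} f_j(T) \;=\; e^{(-2\lambda_j+\tau)(T-t)}\E|\eta_j|^2,
\]
which is exactly the per-mode bound. Summing over $\lambda_j>\lambda$ and using $\sum_j\E|\eta_j|^2=\E\|\eta\|_H^2$ completes the proof. No spectral assumption $(\mathbf H)$ is needed here; only the orthonormal decomposition, It\^o's formula, and the Young inequality trick to eliminate the unknown quadratic variation term.
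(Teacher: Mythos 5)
Your proposal is correct and uses essentially the same mechanism as the paper: It\^o's formula applied to the squared norm, Young's inequality with the exact constant $2F z_j Z_j \ge -F^2|z_j|^2-|Z_j|^2$ so that the $|Z_j|^2$ terms cancel at the cost of $\tau|z_j|^2$, and the spectral gap $\lambda_j>\lambda$ on the high modes. The paper organizes this as a single Hilbert-space computation on $e^{(2\lambda-\tau)(T-s)}\|z(s)\|_H^2$ with a completing-the-square step, whereas you work mode-by-mode with the scalar BSDEs \eqref{eq:main-1} and a differential inequality; this is only a difference in bookkeeping, not in substance.
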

\begin{proof}
  Applying \Ito formula to
  $e^{(2\lambda-\tau)(T-t)}\|z(t;T,\cE_\lambda^\bot\eta)\|_H^2$, then integrating from $(t,T)$, we obtain
\begin{align*}
  & \|\cE^\bot_\lambda \eta\|_H^2 - e^{(2\lambda-\tau)(T-t)}
    \|z(t;T,\cE_\lambda^\bot\eta)\|_H^2\\
= &\ \int_t^T e^{(2\lambda-\tau)(T-s)} \left[2\langle z, Az\rangle_H - 2 \langle F(s)Z(s),z\rangle_H\right] ds\\
& \qquad + \int_t^T e^{(2\lambda-\tau)(T-s)} \|Z\|_{H}^2 ds + \int_t^T e^{(2\lambda-\tau)(T-s)}
    2\langle z, Z\rangle_HdW(s)\\
&\qquad  - \int_t^T e^{(2\lambda-\tau)(T-s)}
    (2\lambda-\tau) \|z(s;T,\cE_\lambda^\bot\eta)\|_H^2 ds.
\end{align*}
Taking the expectation, it follows from equality \eqref{eq:2} that
\begin{align*}
  & \E \|\cE^\bot_\lambda \eta\|_H^2 - e^{(2\lambda-\tau)(T-t)}
    \E \|z(t;T,\cE_\lambda^\bot \eta)\|_H^2\\
= &\ \E \int_t^T e^{(2\lambda-\tau(T-s)} \Big( 2
  \sum_{\lambda_j>\lambda} \lambda_j (z^j)^2-2\lambda\|z(s;T,\cE_\lambda^\bot\eta)\|_H^2 - 2F(s) \langle Z(s),z\rangle_H\\
  &\qquad+ \|Z\|_{H}^2+\tau \|z(s;T,\cE_\lambda^\bot\eta)\|_H^2\Big) ds\\
\ge & \ \E \int_t^T e^{(2\lambda-\tau)(T-s)} \Big(- 2F(s) \langle Z(s),z\rangle_H
+
  \|Z\|_{H}^2 + \tau \|z(s;T,\cE_\lambda^\bot\eta)\|_H^2\Big) ds\\
\ge & \ \E \int_t^T e^{(2\lambda-\tau)(T-s)} \Big(- 2\|F\|_{L_{\F}^\infty(0,T;\R)} \|Z\|_{H}\|z(s;T,\cE_\lambda^\bot\eta)\|_H+\|Z\|_{H}^2\\
&\qquad + \tau \|z(s;T,\cE_\lambda^\bot\eta)\|_H^2\Big) ds    \\
\ge & \ \E \int_t^T e^{(2\lambda-\tau)(T-s)}
\Big\{\big[\|Z\|_{H}-\|F\|_{L_{\F}^\infty(0,T;\R)}\|z(s;T,\cE_\lambda^\bot\eta)\|_H\big]^2\\
& \qquad + \big[\tau-\|F\|^2_{L_{\F}^\infty(0,T;\R)}\big]\|z(s;T,\cE_\lambda^\bot\eta)\|_H^2\Big\} ds\\
\ge &\ 0,
\end{align*}
which implies the inequality \eqref{eq:decay}.
\end{proof}

Next, we provide an interpolation inequality.

\begin{proposition}\label{interpolation}
  For any $\gamma\in(0,1)$ and $N>0$. Given any $\eta\in L^2_{\mathcal{F}_T}(\Omega;H)$, and $t\in [0,T)$, there
  exists a constant $K = K(T,N,\gamma)$ such that
\begin{equation}
  \label{eq:interpolation}
  \E \|z(t;T,\eta)\|_H^2 \le Ke^{K(T-t)^{-\frac{\gamma}{1-\gamma}}} \big(\E \|B^*z(t;T,\eta)\|^2_{U}\big)^{\frac{1}{2}}\big(\E \|\eta\|_H^2\big)^{\frac{1}{2}}.
\end{equation}
\end{proposition}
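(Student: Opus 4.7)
The plan is to run a Lebeau--Robbiano type frequency decomposition. Split $z(t;T,\eta)$ into low- and high-frequency parts via the spectral projectors $\cE_\lambda$ and $\cE_\lambda^\perp$, apply hypothesis ${\bf(H)}$ to the low-frequency part (so that it is controlled by $B^*z$), apply Lemma \ref{decay} to the high-frequency part (which decays exponentially in $\lambda$), and then optimize in $\lambda$ to convert the resulting bound of the form $P(\lambda)\,a^2+Q(\lambda)\,b^2$ into the product form $R\cdot ab$ required in \eqref{eq:interpolation}, where I write $a^2:=\E\|B^*z(t;T,\eta)\|_U^2$ and $b^2:=\E\|\eta\|_H^2$.

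In detail, fix $\lambda>0$. From \eqref{eq:2} one has the orthogonal decomposition
\[
\E\|z(t;T,\eta)\|_H^2 = \E\|\cE_\lambda z(t;T,\eta)\|_H^2 + \E\|\cE_\lambda^\perp z(t;T,\eta)\|_H^2.
\]
The high-frequency part is handled by combining the identity $\cE_\lambda^\perp z(t;T,\eta) = z(t;T,\cE_\lambda^\perp\eta)$ from \eqref{eq:2} with Lemma \ref{decay}, yielding $\E\|\cE_\lambda^\perp z(t;T,\eta)\|_H^2 \leq e^{(-2\lambda+\tau)(T-t)}\E\|\eta\|_H^2$. For the low-frequency part, I apply ${\bf(H)}$ pointwise in $\omega$ to the $\cF_t$-measurable random element $\cE_\lambda z(t;T,\eta)(\omega)\in H$ and then take expectation, obtaining $\E\|\cE_\lambda z(t;T,\eta)\|_H^2 \leq N^2 e^{2N\lambda^\gamma}\E\|B^*\cE_\lambda z(t;T,\eta)\|_U^2$. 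Then I write $B^*\cE_\lambda z = B^*z - B^*\cE_\lambda^\perp z$ and use $\|x+y\|^2\leq 2\|x\|^2+2\|y\|^2$ together with $\|B^*\cE_\lambda^\perp z\|_U\leq \|B^*\|\,\|\cE_\lambda^\perp z\|_H$ and the high-frequency bound. Assembling these pieces yields an estimate of the form
\[
\E\|z(t;T,\eta)\|_H^2 \leq C_1\,e^{2N\lambda^\gamma}\,a^2 + C_2\,e^{2N\lambda^\gamma-2\lambda(T-t)+\tau T}\,b^2,
\]
for some $C_i=C_i(N,\|B^*\|,\tau,T)$.

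The remaining step, and the main technical obstacle, is the optimization in $\lambda$. Setting $d:=(T-t)^{-\gamma/(1-\gamma)}$, I restrict to $\lambda\geq \lambda_0:=(8N/(T-t))^{1/(1-\gamma)}$; since $\gamma\in(0,1)$, this forces $2N\lambda^\gamma\leq \tfrac14\lambda(T-t)$, simplifying the second exponent to at most $-\tfrac{7}{4}\lambda(T-t)+\tau T$. I then branch on the ratio $a/b$: when $a$ dominates $b$, the a priori bound $\E\|z(t;T,\eta)\|_H^2\leq e^{\tau T}b^2$ (which follows from Lemma \ref{decay} with $\lambda$ below the first eigenvalue, so that $\cE_\lambda^\perp=I$) combined with $b^2\leq\text{const}\cdot ab$ in this regime already closes the bound; when $b$ dominates, I shift to $\lambda=\lambda_0+(T-t)^{-1}\log(b/a)\geq\lambda_0$, which makes $e^{-\lambda(T-t)}b^2$ a constant multiple of $ab$, while the low-frequency factor $e^{2N\lambda^\gamma}a^2$ is controlled via the subadditivity $(x+y)^\gamma\leq x^\gamma+y^\gamma$ for $\gamma\in(0,1)$ and the elementary maximization $\sup_{s\geq 0}\{2N(s/(T-t))^\gamma-s\}$, which is of order $d$. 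The bookkeeping that ensures the final exponent is precisely of the form $K(T-t)^{-\gamma/(1-\gamma)}$, and that $K$ depends only on $T,N,\gamma$ (with $\|B^*\|$ and $\tau$ absorbed into $K$), is the delicate part; the earlier steps are routine manipulations of the spectral projectors, hypothesis ${\bf(H)}$, and Lemma \ref{decay}.
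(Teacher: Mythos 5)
Your proposal is correct and follows essentially the same route as the paper: split $z$ via $\cE_\lambda$ and $\cE_\lambda^\perp$, apply ${\bf(H)}$ to the low frequencies, control $B^*\cE_\lambda^\perp z$ by $\|\cE_\lambda^\perp z\|_H$ and invoke Lemma \ref{decay}, then optimize in $\lambda$. The only difference is cosmetic: where you choose $\lambda$ explicitly as $\lambda_0+(T-t)^{-1}\log(b/a)$ with a case split on $a$ versus $b$ (using the a priori bound $\E\|z(t)\|_H^2\le e^{\tau T}\E\|\eta\|_H^2$ when $a$ dominates), the paper first bounds $\sup_{\lambda>0}\{N\lambda^\gamma-\lambda(T-t)\}$ to extract the factor $Ke^{K(T-t)^{-\gamma/(1-\gamma)}}$, substitutes $\e=e^{-\lambda(T-t)}$, extends the resulting inequality to all $\e>0$ via the same a priori bound, and minimizes $\e^{-1}a^2+\e b^2$ to reach the product form --- the two bookkeeping schemes are equivalent.
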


\begin{proof}
  Set $z = z(\cdot;T,\eta)$, then it follows from the
 spectral-like condition ${\bf(H)}$ that
\begin{align*}
  \E \|\cE_\lambda z(t)\|_H^2
& \le Ne^{N\lambda^{\gamma}}\E\|B^*\cE_\lambda z(t)\|^2_U\\
& \le Ne^{N\lambda^{\gamma}}\big(\E \|B^*z(t)\|^2_{U} +
\E\|B^*\cE^\bot_\lambda z(t)\|^2_{U}\big).
\end{align*}
Therefore, by the decay estimate
\eqref{eq:decay} we obtain that
\begin{align*}
&  \E \|z(t)\|_H^2
 = \E \|\cE_\lambda z(t)+\cE^\bot_\lambda
z(t)\|_H^2\\
& \le 2(\E \|\cE_\lambda z(t)\|_H^2+\E \|\cE^\bot_\lambda z(t)\|_H^2)\\
& \le  Ne^{N\lambda^{\gamma}}\big(\E \|B^*z(t)\|^2_{U} +
\E\|B^*\cE^\bot_\lambda z(t)\|^2_{U}\big)+ \E
\|\cE^\bot_\lambda z(t)\|_H^2\\
& \le 2Ne^{N\lambda^{\gamma}}\big(\E \| B^*z(t)\|^2_{U} + \E
\|\cE^\bot_\lambda z(t)\|_H^2\big)\\
&\le 2Ne^{N\lambda^{\gamma}}\big(\E \| B^*z(t)\|^2_{U} +
e^{(-2\lambda+\tau)(T-t)} \E \|\eta\|_H^2\big)\\
&\le  2Ne^{N\lambda^{\gamma}}e^{\tau T}\big(\E \| B^*z(t)\|^2_{U} +
e^{-2\lambda(T-t)} \E \|\eta\|_H^2\big)\\
& = 2Ne^{N\lambda^\gamma-\lambda(T-t)}e^{\tau T}\big(e^{\lambda(T-t)}\E \|B^* z(t)\|^2_{U} +
e^{-\lambda(T-t)} \E \|\eta\|_H^2\big).
\end{align*}
It is easy to verify that for all $\lambda>0$, $\gamma\in (0,1)$,
\[
 \max_{\lambda>0}\big\{N\lambda^\gamma-\lambda(T-t)\big\} \le N\bigg(\dfrac{\gamma N}{T-t}\bigg)^{\frac{\gamma}{1-\gamma}}-(\gamma N)^{\frac{1}{1-\gamma}}\bigg(\dfrac{1}{T-t}\bigg)^{\frac{\gamma}{1-\gamma}}.
\]
%{\color{red} When $\gamma=1/2$, this is worse than the one in \cite{yang2016observability}}

Hence, there exists a constant $K=K(T,N,\gamma)$ such that
\[
 \E \|z(t)\|_H^2 \le K e^{K(T-t)^{\frac{\gamma}{\gamma-1}}}\big[e^{\lambda(T-t)}\E \| B^*z(t)\|^2_{U}+
e^{-\lambda(T-t)} \E \|\eta\|_H^2\big],
\]
which is equivalent to
\begin{equation}
  \label{eq:epsilon inq}
  \E \|z(t)\|_H^2 \le K e^{K(T-t)^{\frac{\gamma}{\gamma-1}}}\big[\e^{-1}\E \| B^*z(t)\|^2_{U} +
\e \E \|\eta\|_H^2\big],\quad \forall\, \e\in (0,1).
\end{equation}
Noting that $\E \|z(t)\|_H^2 \le C \E \|\eta\|_H^2$, where $C$ is a
constant depending on $T$, we see that the inequality
\eqref{eq:epsilon inq} holds for all $\e>0$. Finally, minimizing
\eqref{eq:epsilon inq} with respect to $\e$ leads to the desired
estimate \eqref{eq:interpolation}.
\end{proof}

We are now ready to prove Theorem \ref{thm:main} by the telescoping series method (see \cite{apraiz2014observability,phung2013observability}) and Proposition \ref{interpolation}.
\begin{proof}[Proof of Theorem \ref{thm:main}]
  Let $\ell\in(0,T)$ be any Lebesgue point of $E$. Then for each constant $q\in(0,1)$ which is to be fixed later, there exists a monotone increasing sequence
$\{\ell_n\}_{n\geq1}$ in $(0,\ell)$ such that
$$\lim_{n\rightarrow +\infty}\ell_n=\ell,$$
\begin{equation}\label{eq:equiv ratio}
\ell_{n+2}-\ell_{n+1}=q(\ell_{n+1}-\ell_n),\;\forall\, n\geq1
\end{equation}
and
$$|E\cap (\ell_n,\ell_{n+1})|\geq \frac{\ell_{n+1}-\ell_n}{3},\;\forall\, n\geq1.$$
Set
\begin{equation}
\label{eq-tau}
\tau_n=\ell_{n+1}-\frac{\ell_{n+1}-\ell_n}{6},\;\forall\, n\geq1.
\end{equation}
For each $t\in (\ell_n, \tau_n)$, by the interpolation inequality
\eqref{eq:interpolation} and replacing $ T$ by $l_{n+1}$, we have
\[
\E \|z(t)\|_H^2 \le K e^{K(l_{n+1}-t)^{\frac{\gamma}{\gamma-1}}} \big(\E \|B^*z(t)\|_{U}^2\big)^{\frac{1}{2}}\big(\E \|z(\ell_{n+1})\|_H^2\big)^{\frac{1}{2}}.
\]
Since
\[
\ell_{n+1} -t \ge \ell_{n+1} -\tau_n = \frac{\ell_{n+1} -\ell_n}{6},
\]
and for some constant $C=C(T)$, $\E \|z(\ell_n)\|_H^2 \le C \E
\|z(t)\|_H^2$, there exists a constant $C = C(T,N,\gamma)$ such
that for all $n\ge 1$, and $t\in (\ell_m, \tau_n)$,
\[
\E \|z(\ell_n)\|_H^2 \le C e^{C(l_{n+1}-l_n)^{\frac{\gamma}{\gamma-1}}} \big(\E \|B^*z(t)\|_{U}^2\big)^{\frac{1}{2}}\big(\E \|z(\ell_{n+1})\|_H^2\big)^{\frac{1}{2}}.
\]
Using the Cauchy inequality with $\epsilon$, the above inequality leads to
\[
\E \|z(\ell_n)\|_H^2 \le \e^{-1} C e^{C(l_{n+1}-l_n)^{\frac{\gamma}{\gamma-1}}} \E \|B^*z(t)\|_{U}^2+ \e \E \|z(\ell_{n+1})\|_H^2.
\]
Equivalently, we have
\begin{equation}
  \label{eq:Am-inq}
  A_n \le \e^{-1} C e^{C(l_{n+1}-l_n)^{\frac{\gamma}{\gamma-1}}} G(t) + \e A_{n+1},
\end{equation}
where
\begin{equation}
  \label{eq:Am-B}
  A_n =\left(\E \|z(\ell_n)\|_H^2\right)^{\frac{1}{2}},\ G(t)=\left(\E \|B^*z(t)\|_{U}^2\right)^{\frac{1}{2}}.
\end{equation}
Noting that $\{\ell_n\}_{n\geq1}$ is a monotone increasing sequence
in $(0,\ell)$, it follows that
\begin{align*}
  |E\cap(\ell_n,\tau_n)|
& = |E \cap (\ell_n,\ell_{n+1})| - |E\cap (\tau_n,\ell_{n+1})|\\
& \ge \frac{\ell_{n+1}-\ell_{n}}{3} - \frac{\ell_{n+1}-\ell_{n}}{6}\\
& = \frac{\ell_{n+1}-\ell_{n}}{6}\\
&>0.
\end{align*}
Then integrating the inequality \eqref{eq:Am-inq} over $E\cap (\ell_n,\tau_n)$, we have that for each $\e>0$,
$$
\int_{E\cap (\ell_n,\tau_n)}A_ndt\leq \e^{-1}C e^{C(l_{n+1}-l_n)^{\frac{\gamma}{\gamma-1}}}\int_{l_n}^{\tau_n}\chi_EG(t)dt+\e\int_{E\cap (\ell_n,\tau_n)}A_{n+1}dt,
$$
which implies
$$
A_n\leq \e^{-1}C e^{C(l_{n+1}-l_n)^{\frac{\gamma}{\gamma-1}}}|E\cap(\ell_n,\tau_n)|^{-1}\int_{l_n}^{\tau_n}\chi_EG(t)dt+\e A_{n+1}.
$$
By (\ref{eq-tau}) and $|E\cap(\ell_n,\tau_n)|\geq \frac{1}{6}(\ell_{n+1}-\ell_n)$, it follows that
\[
A_n \le \e A_{n+1}+ \e^{-1} C e^{C(l_{n+1}-l_n)^{\frac{\gamma}{\gamma-1}}} \int_{\ell_{n}}^{\ell_{n+1} } \chi_E G(t) dt,
\]
where $C=C(T,N,\gamma,|E|)$.\\
Multiplying the above inequality by $\e e^{-C(l_{n+1}-l_n)^{\frac{\gamma}{\gamma-1}}}$,
and replacing $\e$ by $\sqrt{\e}$ lead to
\[
\sqrt{\e}  e^{-C(l_{n+1}-l_n)^{\frac{\gamma}{\gamma-1}}} A_n \le \e  e^{-C(l_{n+1}-l_n)^{\frac{\gamma}{\gamma-1}}} A_{n+1} + C \int_{\ell_{n}}^{\ell_{n+1} } \chi_E G(t) dt.
\]
 Finally choosing $\e =
e^{-(l_{n+1}-l_n)^{\frac{\gamma}{\gamma-1}}}$ in the above inequality, we get
\begin{align*}
  & e^{-(C+\frac{1}{2})(l_{n+1}-l_n)^{\frac{\gamma}{\gamma-1}}} A_n -
  e^{-(C+1)(l_{n+1}-l_n)^{\frac{\gamma}{\gamma-1}}} A_{n+1}\le C \int_{\ell_{n}}^{\ell_{n+1} } \chi_E G(t) dt.
\end{align*}
Now, choosing $q = \bigg(\frac{C+\frac{1}{2}}{C+1}\bigg)^{\frac{1-\gamma}{\gamma}}\in(0,1)$ in \eqref{eq:equiv ratio}, we
have
\begin{align*}
  & e^{-(C+\frac{1}{2})(l_{n+1}-l_n)^{\frac{\gamma}{\gamma-1}}} A_n -
  e^{-(C+\frac{1}{2})(l_{n+2}-l_{n+1})^{\frac{\gamma}{\gamma-1}}} A_{n+1} \le C \int_{\ell_{n}}^{\ell_{n+1} } \chi_E G(t) dt.
\end{align*}
Summing the above inequality from $n=1$ to $+\infty$, we have
\[
A_1 \le C e^{(C+\frac{1}{2})(l_2-l_1)^{\frac{\gamma}{\gamma-1}}}
\int_{\ell_1}^\ell \chi_E G(t) dt.
\]
Plugging the substitution \eqref{eq:Am-B} into the above inequality, we obtain
\begin{equation}\label{qwe}
\E \|z(\ell_1)\|_H^2 \le C e^{(C+\frac{1}{2})(l_2-l_1)^{\frac{\gamma}{\gamma-1}}}
\left(\int_{\ell_1}^\ell \chi_E \big(\E\|B^*z(t)\|_{U}^2\big)^{\frac{1}{2}} dt\right)^2,   
\end{equation}
 which, along with the fact that $\E \|z(0)\|_H^2 \le C(T) \E \|z(l_1)\|_H^2$, implies the observability inequality \eqref{thm:main-ob}. The proof is completed.
\end{proof}

\section{Applications}
\label{sec:app}
This section presents some specific examples which are under the framework of the backward stochastic evolution equation (\ref{eq:main}) and Theorem \ref{thm:main}.

\subsection{Stochastic degenerate equation}
\label{subsec:degenerate}
In this subsection, we discuss the controllability for a stochastic degenerate equation.

The set $I:=(0,1)$, spaces $H=U=L^2(I)$, $F(t) \in L^\infty_{\F}(0,T;\R)$, denote by $\chi_G$, the characteristic function of $G$, where $G\subset I$ is a  measurable subset with positive measure. 

Assume
\begin{equation}
\label{eq-a:1}
a\in C([0,1])\cap C^1((0,1]),\,\,a>0\,\, \text{on}~\,(0,1]\,\, \text{and}~\, a(0)=0.
\end{equation}
Let $A$ be an unbounded linear operator on $L^2(I)$:
$$
\left\{
\begin{array}{ll}
\mathcal{D}(A):=\{v\in H_a^1(I):av_x\in H^1(I)\},\\
Av :=  (av_x)_x,\ \forall v\in\mathcal{D}(A),
\end{array}
\right.
$$
where $H_a^1(I)$ will be defined later. By \cite{cannarsa2008carleman} and \cite{cannarsa2005null}, $A$ is the infinitesimal generator of a strongly continuous semigroup $\{S(t)\}_{t\geq0}$ on $L^2(I)$.

Consider initial state $y_0 \in L^2_{\mathcal{F}_0}(\Omega;L^2(I))$, $y$ is the $L^2(I)$-valued state variable, $u$ is the control variable and $u\in L^\infty_\F(0,T;L^2(\Omega; L^2(I)))$. The set $E$ is a measurable subset of $(0, T)$ with positive measure.
\begin{definition}
If, in addition to (\ref{eq-a:1}), the function $a$ satisfies
\begin{equation}
\label{eq-a:2}
\dfrac{1}{a}\in L^1(I),
\end{equation}
the following system
$$
\left\{
\begin{array}{ll}
d y = A ydt + \chi_E\chi_G u dt + F(t) y dW(t), & \left( x ,t\right) \in I\times(0,T),   \\[2mm]
y(1,t)=y(0,t) = 0, & t\in \left(0,T\right), \\[2mm]
y\left(x, 0\right) =y_{0}(x), &  x\in I\,.
\end{array}
\right.
$$
is called the stochastic weakly degenerate problem (SWDP).
\end{definition}

\begin{definition}
If, in addition to (\ref{eq-a:1}), the function $a$ satisfies
\begin{equation}
\label{eq-a:3}
a\in C^1([0,1]),\quad\ \text{and}~\quad \dfrac{1}{\sqrt a}\in L^1(I).
\end{equation}
the following system
$$
\left\{
\begin{array}{ll}
d y = A ydt +\chi_E\chi_G u dt + F(t) y dW(t), & \left( x,t\right) \in I\times(0,T),   \\[2mm]
y(1,t)=(a y_{x})\left( 0,t\right) =0 , & t\in \left(0,T\right), \\[2mm]
y\left(x, 0\right) =y_{0}(x), &  x\in I\,.
\end{array}
\right.
$$
is called the stochastic strongly degenerate problem (SSDP).
\end{definition}
% It is worth noting that the condition $\dfrac{1}{a}\notin L^1(I)$ makes the system to be weak with Dirichlet boundary condition, instead of Neumann condition in the strong system.

As a result, in present subsection, we consider the following forward
stochastic degenerate equation:
\begin{equation}
\label{eq:degenerate-main}
\left\{
\begin{array}{ll}
d y = A ydt +  \chi_E\chi_G u dt + F(t) y dW(t), & \left( x,t\right) \in I\times(0,T),   \\[2mm]
y(1,t) = 0, & t\in \left(0,T\right), \\[2mm]
\mbox{and }\left\{
\begin{array}{ll}
y\left( 0,t\right) =0, & \quad \mbox{for SWDP}, \\[2mm]
(a y_{x})\left( 0,t\right) =0, & \quad \mbox{for SSDP},
\end{array}
\right.
&  t\in \left( 0,T\right), \\
\\y\left(x, 0\right) =y_{0}(x), &  x\in I\,.
\end{array}
\right.
\end{equation}
By \cite{liu2019carleman}, also see \cite{krylov2007stochastic,cannarsa2016global}, we know that the equation (\ref{eq:degenerate-main}) admits a unique solution $y\in L^2_{\mathbb{F}}(\Omega;C([0,T];L^2(I))) \cap L^2_\mathbb{F}(0,T;H_a^1(I))$, where
$$
H^1_a(I):=\left\{\ \
	\begin{array}{lll}
	\mbox{SWDP}: \bigg\{v\in L^2(I):v\, \text{absolutely continuous in}~ [0,1],\\[2mm]
 \quad\ \quad\ \quad\ \sqrt av_x\in L^2(I)\,\text{and}~\,v(0)=v(1)=0 \bigg\}.\\
\mbox{SSDP}:\,\,\, \bigg\{v\in L^2(I):v\, \text{locally absolutely continuous in}~ (0,1],\\[2mm]
 \quad\ \quad\ \quad\
\sqrt av_x\in L^2(I)\,\text{and}~\,v(1)=0 \bigg\}.
	\end{array}\right.
$$
with the norm
$$
\forall\,v\in H^1_a(I),\quad \|v\|^2_{H^1_a(I)}:=\|v\|_{L^2(I)}^2+\|\sqrt av_x\|_{L^2(I)}^2.
$$

Specifically, in this paper, we adopt the function $a(x) = x^\alpha$. Consequently, the stochastic weakly degenerate problem corresponds to $\alpha \in (0, 1)$, whereas the stochastic strongly degenerate problem corresponds to $\alpha \in [1, 2)$.

Let $\alpha\in(0,2)$ and $\sigma\in(0,1)$ be defined as
\begin{equation}\label{u}
\sigma=
\left\{
\begin{array}{lll}
\dfrac{3}{4}, &~\text{if}~ \alpha\in(0,2)\setminus\{1\},\\[3mm]
\dfrac{3}{2\gamma}  ~\text{for any}~\gamma\in(0,2),&~\text{if}~ \alpha=1.
\end{array}
\right.
\end{equation}

The main result concerns the null controllability property of forward stochastic degenerate equation (\ref{eq:degenerate-main}) as follows:
\begin{theorem}
  \label{thm:degenerate-main}
Let $\alpha\in(0,2)$ and $\sigma\in(0,1)$ be defined in (\ref{u}). Suppose $G\times E$ are measurable subsets of $I\times [0,T]$ with positive measures, then the equation (\ref{eq:degenerate-main}) is null controllable. That is,
for each initial data $y_0\in L^2_{\mathcal{F}_0}(\Omega;L^2(I))$, there
is a control $u$ in the space $L^\infty_\F(0,T;L^2(\Omega; L^2(I)))$ such that
the solution $y$ of the stochastic degenerate equation \eqref{eq:degenerate-main} satisfies $y(T) = 0$
in $I$, $\bP$-a.s. Moreover, there is a constant $C=C(T,I,\alpha,\sigma, |E|,|G|)>0$ such that the control $u$ satisfies the following
estimate
\begin{equation}
  \label{eq:control est}
  \|u\|_{L^\infty_\F(0,T;L^2(\Omega; L^2(I)))} \le C  \|y_0\|_{L^2_{\mathcal{F}_0}(\Omega;L^2(I))}.
\end{equation}
\end{theorem}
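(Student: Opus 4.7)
The plan is to prove Theorem~\ref{thm:degenerate-main} by the standard duality/HUM route: first recognise the backward adjoint of \eqref{eq:degenerate-main} as a special case of \eqref{eq:main}, then verify the spectral-like condition ${\bf(H)}$ for the degenerate operator, invoke Theorem~\ref{thm:main} to obtain the observability inequality, and finally convert it into null controllability of the forward equation with the estimate \eqref{eq:control est}. In the adjoint correspondence one takes $H=U=L^2(I)$, $A=(a(x)\partial_x)\partial_x$ with the SWDP or SSDP boundary conditions, and the observation operator $B=M_{\chi_G}$ (pointwise multiplication by $\chi_G$); since $B$ is self-adjoint, $B^*=B$.

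\textbf{Key analytic step: the spectral inequality.} The central task is to verify ${\bf(H)}$ with $\gamma=\sigma$ as in \eqref{u}. For $a(x)=x^\alpha$ the eigenvalues $\{\lambda_j\}$ of $-A$ grow polynomially and the eigenfunctions $\{e_j\}$ are explicit in terms of Bessel functions, so what is needed is the Lebeau--Robbiano-type bound
\[
\|\cE_\lambda f\|_{L^2(I)}\le N e^{N\lambda^\sigma}\|\chi_G\cE_\lambda f\|_{L^2(I)},\qquad \forall\, f\in L^2(I),\ \lambda>0,
\]
on the \emph{measurable} subset $G\subset I$. For $\alpha\in(0,2)\setminus\{1\}$ this is known with exponent $\sigma=3/4$; the critical case $\alpha=1$ requires the analytic parameter $\gamma\in(3/2,2)$ and yields $\sigma=3/(2\gamma)\in(0,1)$. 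Both statements are established in \cite{liu2023observability} through an analytic-extension argument on $(0,1]$ (possible because $a$ degenerates only at the left endpoint) combined with a Tur\'an-type inequality inside $G$; we invoke them directly.

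\textbf{Observability and HUM.} With ${\bf(H)}$ in hand, Theorem~\ref{thm:main} yields, for every $\eta\in L^2_{\cF_T}(\Omega;L^2(I))$, the observability inequality
\[
\|z(0;T,\eta)\|_{L^2_{\cF_0}(\Omega;L^2(I))}\le C\,\|\chi_E\chi_G\, z(\cdot;T,\eta)\|_{L^1_{\F}(0,T;L^2(\Omega;L^2(I)))},
\]
where $z$ solves the backward adjoint of \eqref{eq:degenerate-main}. The duality between $L^1_{\F}(0,T;L^2(\Omega;U))$ and $L^\infty_{\F}(0,T;L^2(\Omega;U))$ matches precisely the space in which we seek $u$. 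Minimising the convex functional
\[
J(\eta)=\tfrac12\|\chi_E\chi_G z(\cdot;T,\eta)\|_{L^1_{\F}(0,T;L^2(\Omega;L^2(I)))}^2+\E\langle y_0,z(0;T,\eta)\rangle_{L^2(I)}
\]
over $\eta\in L^2_{\cF_T}(\Omega;L^2(I))$ (coercivity is exactly the observability inequality) produces a minimiser $\hat\eta$ whose associated control $u=\chi_E\chi_G z(\cdot;T,\hat\eta)\in L^\infty_{\F}(0,T;L^2(\Omega;L^2(I)))$ steers $y$ to $0$ at time $T$; It\^o's formula applied to $\langle y,z\rangle_{L^2(I)}$ verifies $y(T)=0$ $\bP$-a.s., and the estimate \eqref{eq:control est} follows from the observability constant.

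\textbf{Main obstacle.} The whole difficulty sits in the spectral inequality. The standard Lebeau--Robbiano mechanism rests on analytic-continuation bounds for finite spectral sums, and the degeneracy of $a$ at $x=0$ prevents analytic extension across the left endpoint; treating $G$ as an arbitrary measurable set of positive measure, rather than an open subset on which Carleman-type arguments apply directly, rules out further shortcuts. The analysis in \cite{liu2023observability} circumvents both obstructions via a Bessel-function representation, and that analysis is also what forces the critical exponent $\alpha=1$ to yield only the weaker $\sigma=3/(2\gamma)$ in \eqref{u}. Once ${\bf(H)}$ has been imported, the remainder of the proof is a structural application of Theorem~\ref{thm:main} together with the standard duality argument.
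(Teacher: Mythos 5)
Your overall architecture matches the paper's: verify a spectral-like condition for the degenerate operator, feed it into Theorem~\ref{thm:main} (decay estimate, interpolation inequality, telescoping series) to get the observability inequality \eqref{eq:degenerate-obs-inq}, and then pass to null controllability by the standard $L^1$--$L^\infty$ duality/HUM argument with the functional $J$. The duality half of your proposal is fine and is exactly what the paper does (it simply cites Theorem 7.17 of \cite{lv2021mathematical}).

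The gap is in your central analytic step. You assert the spectral inequality
$\|\cE_\lambda f\|_{L^2(I)}\le Ne^{N\lambda^\sigma}\|\chi_G\cE_\lambda f\|_{L^2(I)}$
directly on the arbitrary measurable set $G$, i.e.\ condition ${\bf(H)}$ with $B=\chi_G$. This is precisely the step the paper says it \emph{cannot} carry out: because $a(x)=x^\alpha$ degenerates at $x=0$, the quantitative analyticity estimates for the elliptic extension $f(x,\mu)=\sum_{\lambda_j\le\lambda}\langle\xi,e_j\rangle e^{\lambda_j^\sigma\mu}e_j(x)$ --- which are what allow the propagation-of-smallness Lemma~\ref{lemma-measurable} to convert an open-set spectral inequality into a measurable-set one --- are only available on subdomains where the coefficient is analytic \emph{and non-degenerate}. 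If $G$ accumulates at $x=0$, that mechanism breaks down. The paper's fix is to replace $G$ by $G_0=(\varepsilon,1)\cap G$ with $|G_0|\ge|G|/2$, prove the spectral-like condition \eqref{eq:spectral} with observation in $L^1(G_0)$ (combining the Buffe--Phung open-set spectral inequality on $\tilde G=(\varepsilon,1)$ with the analyticity estimate \eqref{lemma-analytic-eq4} valid on $\tilde G\times D_R(0)$ and Lemma~\ref{lemma-measurable}), run the whole observability argument with $\chi_{G_0}$, and only at the very end use $G_0\subset G$ to deduce \eqref{eq:degenerate-obs-inq}. Your proposal needs this reduction (or a genuine proof of the full-$G$ inequality, which the paper does not supply and attributes difficulty to); as written, the Bessel-function/Tur\'an-inequality route you sketch is not what the paper uses and is not substantiated. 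The repair is cheap --- insert the $G_0$ truncation and observe that shrinking the observation set only weakens the right-hand side of the observability inequality --- but without it the key inequality you invoke is unproven.
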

\begin{remark}
It is noteworthy that the existing research on the controllability problems associated with stochastic degenerate parabolic equation (\ref{eq:degenerate-main}) has predominantly employed the Carleman estimate technique, see \cite{wu2020carleman,liu2019carleman,baroun2022carleman}. However, the scope of these findings has been restricted to the control space $L^2_\F(0,T;L^2(I))$ and instances where the controlled region $G$ is an open subset spanning the entire time interval $[0,T]$. Moreover, these investigations have primarily concentrated on outcomes within the framework of weak degeneracy conditions, with limited exploration into the implications arising from scenarios involving strong degeneracy. In sharp contrast, our study not only encompasses control regions characterized by measurable sets in space-time, but also incorporates a control space of $L^\infty_\F(0,T;L^2(\Omega; L^2(I)))$. Furthermore, our research establishes null controllability under both strong and weak degeneracy conditions.
\end{remark}

In order to prove Theorem \ref{thm:degenerate-main}, we introduce the following backward stochastic evolution equation:
\begin{equation}
\label{eq:degenerate-adjoint}
\left\{
\begin{array}{ll}
d z = -A zdt - F(t)Z dt + Z dW(t), & \left( x,t\right) \in I\times(0,T),   \\[2mm]
z(1,t) = 0, & t\in \left(0,T\right), \\[2mm]
\mbox{and }\left\{
\begin{array}{ll}
z\left( 0,t\right) =0, & \quad \mbox{for SWDP}, \\[2mm]
(a z_{x})\left( 0,t\right) =0, & \quad \mbox{for SSDP},
\end{array}
\right.
&  t\in \left( 0,T\right), \\
\\z\left(x, T\right) =\eta, &  x\in I\,,
\end{array}
\right.
\end{equation}
where $\eta\in L^2_{\mathcal{F}_T}(\Omega;L^2(I))$.

By the standard duality argument (e.g., see \cite[Theorem 7.17]{lv2021mathematical}), Theorem \ref{thm:degenerate-main} is deduced by the following observability inequality for
the backward stochastic  degenerate equation (\ref{eq:degenerate-adjoint}).
\begin{theorem}
  \label{thm:degenerate-obs-inq}
Let $\alpha\in(0,2)$ and $\sigma\in(0,1)$ be defined in (\ref{u}). Supposed $G\times E$ are measurable subsets of $I\times (0,T)$ with positive measures. Then there exists a constant $C=C(T,I,\alpha,\sigma, |E|,|G|)$ such that the following observability inequality holds: for any $\eta\in L^2_{\mathcal{F}_T}(\Omega;L^2(I))$,
\begin{equation}
  \label{eq:degenerate-obs-inq}
  \|z(0;T,\eta)\|_{L^2_{\mathcal{F}_0}(\Omega;L^2(I))} \le C \|\chi_E\chi_Gz(\cdot;T,\eta)\|_{L^1_\F(0,T;L^2(\Omega; L^2(I)))}.
\end{equation}
\end{theorem}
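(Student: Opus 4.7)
The plan is to recognize that the backward system \eqref{eq:degenerate-adjoint} fits into the abstract framework of Theorem \ref{thm:main} with $H = U = L^2(I)$ and observation operator $B\phi := \chi_G \phi$. Since this $B$ is self-adjoint as a multiplication operator, $B^*z = \chi_G z$, so the abstract inequality \eqref{thm:main-ob} coincides with the target \eqref{eq:degenerate-obs-inq}. The whole theorem therefore reduces to verifying the spectral-like hypothesis (H) for the degenerate Sturm-Liouville operator $-Av = -(x^\alpha v_x)_x$ on $I=(0,1)$, with Dirichlet condition at $x=1$ and the boundary condition (Dirichlet for SWDP, weighted Neumann for SSDP) prescribed at the degeneracy $x=0$, and with growth exponent $\gamma$ equal to the $\sigma$ in \eqref{u}.

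A first concrete step is to describe the spectrum of $-A$. Standard theory for this degenerate Sturm-Liouville operator provides a discrete, strictly positive spectrum $\{\lambda_j\}_{j\ge 1}$ with eigenfunctions $\{e_j\}$ expressible in closed form via Bessel functions of an order $\nu$ depending on $\alpha$ and on the boundary condition at $0$. The eigenvalues grow polynomially in $j$, and zeros of the relevant Bessel functions control both the spacing of $\{\lambda_j\}$ and pointwise bounds on the $e_j$, which are the raw ingredients for any Lebeau-Robbiano type estimate.

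The hard part is to prove the spectral inequality
\begin{equation*}
\|\cE_\lambda f\|_{L^2(I)} \le N e^{N\lambda^{\sigma}}\|\chi_G \cE_\lambda f\|_{L^2(I)},\qquad \forall\, f\in L^2(I),\ \lambda>0,
\end{equation*}
with $N=N(|G|,\alpha,\sigma)$. The standard Lebeau-Robbiano method that yields exponent $1/2$ for a smooth uniformly elliptic problem fails here because the coefficient $x^\alpha$ vanishes at $x=0$, so linear combinations of the $e_j$ are not analytic in any neighborhood of the degenerate endpoint; a further subtlety is that $G$ is only measurable, not open. Following the strategy of \cite{liu2023observability}, I would split $I$ into a short interval $(0,\delta)$ containing the degenerate point and the uniformly elliptic complement $[\delta,1]$: on the latter, a Remez/three-spheres type inequality based on analytic extension of partial sums $\sum_{\lambda_j\le\lambda} c_j e_j$ propagates smallness from $G\cap[\delta,1]$ (reduced, if necessary, to a neighborhood of a density point of $G$) to all of $[\delta,1]$; on the short interval the explicit Bessel representation and uniform bounds on the $\{e_j\}$ control the contribution near $0$. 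Balancing the two regimes and optimizing $\delta$ as a function of $\lambda$ produces the prescribed exponent $\sigma$, with the borderline case $\alpha=1$ treated separately because the relevant Bessel order degenerates there and introduces logarithmic losses, which is what forces the second clause in \eqref{u}.

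Once (H) is verified with $\gamma=\sigma$, a direct application of Theorem \ref{thm:main} to \eqref{eq:degenerate-adjoint}, together with the identification $B^*z=\chi_G z$, yields \eqref{eq:degenerate-obs-inq} with a constant $C$ depending only on $T$, $I$, $\alpha$, $\sigma$, $|E|$ and $|G|$, as claimed.
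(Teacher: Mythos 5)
Your overall reduction---cast \eqref{eq:degenerate-adjoint} in the abstract framework with $B$ a (self-adjoint) multiplication by a characteristic function, verify the spectral-like condition ${\bf(H)}$ with exponent $\sigma$ from \eqref{u}, and invoke Theorem \ref{thm:main}---is the right skeleton, and you correctly identify that the loss of analyticity at the degenerate endpoint is the obstruction. But your plan commits to proving ${\bf(H)}$ for the \emph{full} measurable set $G$, which may touch or accumulate at $x=0$, and the step that is supposed to handle the region near the degeneracy is not an argument: ``the explicit Bessel representation and uniform bounds on the $\{e_j\}$ control the contribution near $0$'' does not explain how you bound $\|\cE_\lambda f\|_{L^2(0,\delta)}$ for an \emph{arbitrary} linear combination $f=\sum_{\lambda_j\le\lambda}c_je_j$ in terms of observable quantities; that control is itself a spectral-inequality-type statement and is precisely the hard part. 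Likewise, ``balancing the two regimes and optimizing $\delta$ as a function of $\lambda$'' is asserted to reproduce the exponent $\sigma=3/4$ (resp.\ the $\alpha=1$ clause), but no mechanism is given; you are in effect proposing to re-derive the Buffe--Le Rousseau spectral inequality from scratch \emph{and} extend it to measurable sets containing the degenerate point, a substantially harder result than the theorem requires.

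You are missing the simple reduction the paper uses: since the observability inequality only becomes stronger when the observation set shrinks, it suffices to prove \eqref{eq:degenerate-obs-inq} with $G$ replaced by $G_0:=(\varepsilon,1)\cap G$ for small $\varepsilon>0$ with $|G_0|\ge |G|/2$, because $\|\chi_E\chi_{G_0}z\|\le\|\chi_E\chi_{G}z\|$. On $G_0$ the problem is uniformly elliptic: the paper quotes the spectral inequality of \cite{buffe2018spectral} on the open set $(\varepsilon,1)$ (Lemma \ref{lem:degenerate-spectral}, which is where the exponent $\sigma$ of \eqref{u} actually comes from), establishes quantitative real-analyticity of $\cE_\lambda\xi$ on $(\varepsilon,1)$ via the elliptic lift \eqref{lemma-analytic-eq3}, and then applies propagation of smallness from measurable sets (Lemma \ref{lemma-measurable}) to pass from the open set $(\varepsilon,1)$ to the measurable set $G_0$, yielding \eqref{eq:spectral}. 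After that, the decay estimate, interpolation inequality and telescoping-series argument of Section 2 go through verbatim. If you adopt this reduction, your outline becomes essentially the paper's proof; without it, the near-degenerate regime remains an unproved (and unnecessary) claim.
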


%Noting that we shall derive directly the observability inequality (\ref{eq:degenerate-obs-inq}) instead of using Carleman estimates.
In order to obtain the Theorem \ref{thm:degenerate-obs-inq}, denote by $G_0:=(\varepsilon,1)\cap G$ the subset of $I$ with sufficient small $\varepsilon>0$ such that $|G_0|\geq\frac{|G|}{2}$.

Considering the observation operator $B=\chi_{G_0}$, the characteristic function of $G_0$, we shall study the following refined forward
stochastic degenerate equation of \eqref{eq:degenerate-main}:
\begin{equation}
\label{eq:degenerate-main-re}
\left\{
\begin{array}{ll}
d y = A ydt +  \chi_{E}\chi_{G_0} u dt + F(t) y dW(t), & \left( x,t\right) \in I\times(0,T),   \\[2mm]
y(1,t) = 0, & t\in \left(0,T\right), \\[2mm]
\mbox{and }\left\{
\begin{array}{ll}
y\left( 0,t\right) =0, & \quad \mbox{for SWDP}, \\[2mm]
(a y_{x})\left( 0,t\right) =0, & \quad \mbox{for SSDP},
\end{array}
\right.
&  t\in \left( 0,T\right), \\
\\y\left(x, 0\right) =y_{0}(x), &  x\in I\,.
\end{array}
\right.
\end{equation}
It is noted that the corresponding backward stochastic equation to \eqref{eq:degenerate-main-re} is still \eqref{eq:degenerate-adjoint}.

\begin{remark}
    The reason for introducing the refined equation \eqref{eq:degenerate-main-re} stems from the difficulty in obtaining the spectral-like condition ${\bf(H)}$ when the observation operator $B$ is represented by $\chi_G$, due to the degeneracy of the original equation \eqref{eq:degenerate-main}. However, by choosing the observation operator $B$ as $\chi_{G_0}$ and excluding the degenerate points from the refined equation \eqref{eq:degenerate-main-re}, we obtain a non-degenerate system. In this context, adopting the approach proposed in \cite{apraiz2014observability} enables us to establish spectral-like condition ${\bf(H)}$, and subsequently applying the methods described in \cite{liu2023observability} leads us to Theorem \ref{thm:degenerate-obs-inq}.
\end{remark}

Now recall an important spectral inequality used later in this subsection; see Theorem 1.1 in \cite{buffe2018spectral}.
\begin{lemma}
  \label{lem:degenerate-spectral}
Let $\omega$ be an open and nonempty subset of $I$ and $\alpha\in(0,2)$, there exist constants $\sigma\in(0,1)$ defined in (\ref{u}) and $C=C(I,\alpha,\sigma)>0$ such that
\begin{equation*}
\|\cE_\lambda \xi\|_{L^2(I)}
\leq Ce^{C\lambda^\sigma}\|\cE_\lambda \xi \|_{L^2(\omega)},~\forall\, \xi\in
L^2(I), \lambda>0.
\end{equation*}
\end{lemma}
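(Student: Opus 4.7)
The plan is to establish this spectral inequality by the classical Lebeau--Robbiano augmentation technique adapted to the degenerate operator, lifting a finite spectral sum to a two-variable elliptic problem and then extracting the estimate from a Carleman inequality for that elliptic problem. Since this is a one-dimensional statement, the Fourier-series structure can be used very explicitly throughout.

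First, for $\xi\in L^2(I)$ write $v:=\cE_\lambda\xi=\sum_{\lambda_j\le\lambda}c_j e_j$ and introduce its hyperbolic lift
\[
u(x,s)\;:=\;\sum_{\lambda_j\le\lambda}c_j\,\frac{\sinh(\sqrt{\lambda_j}\,s)}{\sqrt{\lambda_j}}\,e_j(x),\qquad(x,s)\in I\times\R,
\]
which solves the two-dimensional degenerate elliptic equation $\partial_s^2 u+(a(x)u_x)_x=0$ with the boundary conditions of $\mathcal D(A)$ at $x=0,1$ and Cauchy data $u(x,0)=0$, $\partial_s u(x,0)=v(x)$. Parseval in $x$ together with elementary bounds on $\sinh(\sqrt{\lambda_j}s)/\sqrt{\lambda_j}$ gives two sharp comparisons: a global upper bound $\|u\|_{L^2(I\times(-S,S))}\le Ce^{C\sqrt{\lambda}}\|v\|_{L^2(I)}$ for any fixed $S>0$, and a local lower bound of the form $\|v\|_{L^2(\omega)}^2\lesssim \delta^{-2}\|u\|_{L^2(\omega\times(-\delta,\delta))}^2$ recovered from the Cauchy datum at $s=0$.

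Next, the heart of the argument is a global Carleman estimate for the augmented degenerate elliptic operator $P:=\partial_s^2+\partial_x(a(x)\partial_x\cdot)$, with a weight $\varphi(x,s)=e^{\mu\psi(x,s)}$ whose $\psi$ is adapted to the behaviour $a(x)=x^\alpha$ at the degenerate endpoint. One chooses $\psi$ so that the Hardy-type terms produced by integration by parts absorb the degeneracy: in the weakly degenerate regime $\alpha\in(0,1)$ and in the strongly degenerate regime $\alpha\in(1,2)$, the endpoint $x=0$ is handled by powers of $x$ matching the exponent $\alpha$; in the critical case $\alpha=1$, a logarithmic change of variable linearises the degeneracy and a one-parameter family of weights indexed by $\gamma\in(0,2)$ becomes available. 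From this Carleman estimate plus a standard three-cylinder/propagation-of-smallness argument, one deduces an interpolation inequality
\[
\|u\|_{L^2(I\times(-S/2,S/2))}\;\le\;C\|u\|_{L^2(\omega\times(-S,S))}^{\theta}\|u\|_{L^2(I\times(-S,S))}^{1-\theta}
\]
for some $\theta=\theta(\alpha)\in(0,1)$.

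Finally, inserting the two Parseval comparisons from the first step into this interpolation inequality, optimising in the Carleman parameter, and tracking how that optimisation depends on $\lambda$ yields the target bound $\|\cE_\lambda\xi\|_{L^2(I)}\le Ce^{C\lambda^\sigma}\|\cE_\lambda\xi\|_{L^2(\omega)}$ with the announced exponent: the scaling $\sqrt{\lambda}$ from the hyperbolic lift against the polynomial gain in the Carleman parameter produces $\sigma=3/4$ whenever $\alpha\ne 1$, while in the borderline case $\alpha=1$ the logarithmic rescaling introduces the free parameter $\gamma\in(0,2)$ and gives $\sigma=3/(2\gamma)$, as in \eqref{u}. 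The main obstacle, and the reason the result is non-trivial, is designing the weight $\psi$ at $x=0$ so that the Carleman estimate holds uniformly with the right dependence on $\lambda$ in both degeneracy regimes simultaneously; once that is in place, the remaining steps are standard applications of the Lebeau--Robbiano machinery.
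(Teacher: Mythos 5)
The paper itself gives no proof of this lemma: it is quoted directly as Theorem 1.1 of \cite{buffe2018spectral}, so what you are in effect reconstructing is the argument of that reference. Your outline does follow the correct strategy, and the same one as the source: the Lebeau--Robbiano augmentation $\partial_s^2u+(a(x)u_x)_x=0$ with Cauchy data $(0,v)$ at $s=0$, a Carleman estimate for the augmented degenerate elliptic operator, a three-cylinder/interpolation inequality, and Parseval comparisons to pass between $u$ and $v=\cE_\lambda\xi$.

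As a proof, however, the proposal has a genuine gap exactly where you flag ``the main obstacle'': the global Carleman estimate for $P=\partial_s^2+\partial_x\left(x^\alpha\partial_x\,\cdot\right)$ with a weight adapted to the degeneracy at $x=0$ is asserted rather than established, and that estimate is essentially the entire technical content of \cite{buffe2018spectral}. In addition, the final bookkeeping is not carried out and, as written, would not yield the stated exponent: if the interpolation inequality held with a $\lambda$-independent $\theta$ and constants, combining it with your two Parseval comparisons would give $e^{C\sqrt{\lambda}}$, i.e.\ $\sigma=1/2$, not $\sigma=3/4$. The exponents $3/4$ (for $\alpha\neq1$) and $3/(2\gamma)$ (for $\alpha=1$) in \eqref{u} arise precisely because the degenerate Carleman estimate carries a loss near $x=0$ that couples the Carleman large parameter to $\lambda$; obtaining them requires quantifying that loss, which the sketch does not attempt. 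So the skeleton is right and matches the cited source, but the load-bearing Carleman step and the computation producing $\sigma$ are both missing.
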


Next, we recall the following observability estimate or propagation of smallness inequality
from measurable sets:
\begin{lemma}
\label{lemma-measurable}
Let $\omega$ be a bounded domain in $\R^n(n\geq1)$ and $\tilde\omega\subset\omega$  be a measurable set of
positive measure. Let $\xi$ be an analytic function in $\omega$ satisfying
\begin{equation*}
|\partial_x^a\xi(x)|\leq M (\rho)^{-|a|}|a|!,\,\,\forall\,x\in \omega,\,\,\forall\,a\in\N^n,
\end{equation*}
where $M>0$, $\rho\in(0,1]$.
Then there are $C=C(|\tilde\omega|, \rho,|\omega| )$ and $\theta=\theta(|\tilde\omega|, \rho,|\omega| )$, $0 <\theta<1$, such that
\begin{equation}\label{M}
\|\xi\|_{L^\infty(\omega)}\leq CM^{1-\theta} \bigg( \int_{\tilde\omega}|\xi(x)|dx \bigg)^{\theta}.
\end{equation}
\end{lemma}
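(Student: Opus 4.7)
The plan is to establish the bound first on a small ball where $\tilde\omega$ has positive relative density, and then propagate the resulting estimate to all of $\omega$ by a chain of three-balls inequalities. The derivative hypothesis on $\xi$ is the key analytic input: it lets us extend $\xi$ to a bounded holomorphic function on a complex thickening $\{z\in\C^n:\mathrm{dist}(z,\omega)<c\rho\}$ with $\|\xi\|_{L^\infty}\leq C M$, so that $\log|\xi|$ is essentially plurisubharmonic on that thickening.

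First I would fix a Lebesgue density point $x_0\in\tilde\omega$ and choose a radius $r_0>0$, depending only on $|\tilde\omega|$, $|\omega|$ and $\rho$, such that $B(x_0,2r_0)\subset\omega$ and $|\tilde\omega\cap B(x_0,r_0)|\geq \kappa |B(x_0,r_0)|$ for some $\kappa>0$ controlled by $|\tilde\omega|/|\omega|$. On $B(x_0,r_0)$ the Taylor expansion of $\xi$ at $x_0$ converges geometrically, with the tail beyond order $N$ bounded by $CM(r_0/\rho)^{N}$. Writing $\xi=p_N+R_N$ with $p_N$ a polynomial of degree $N$ and $R_N$ a small remainder, I would apply a Remez-type inequality for polynomials restricted to a measurable set (the Brudnyi--Ganzburg inequality, or equivalently the Nadirashvili--Vessella lemma for bounded holomorphic functions) to bound $\|p_N\|_{L^\infty(B(x_0,r_0))}$ by a constant depending polynomially on $(|B(x_0,r_0)|/|\tilde\omega\cap B(x_0,r_0)|)^N$ times $\|p_N\|_{L^1(\tilde\omega\cap B(x_0,r_0))}$. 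Optimizing $N$ against the remainder size leads to a local estimate of the form
$$
\|\xi\|_{L^\infty(B(x_0,r_0))}\leq C_0 M^{1-\theta_0}\bigg(\int_{\tilde\omega\cap B(x_0,r_0)}|\xi(x)|\,dx\bigg)^{\theta_0},
$$
with $C_0,\theta_0\in(0,1)$ depending only on $\rho$, $|\omega|$, $|\tilde\omega|$.

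Next I would propagate this local bound to the whole of $\omega$ using the Hadamard three-balls inequality, which is available for $\xi$ because of its holomorphic extension: for concentric balls $B_r\subset B_R\subset B_{R'}$ inside the complex thickening,
$$
\|\xi\|_{L^\infty(B_R)}\leq C\|\xi\|_{L^\infty(B_r)}^{\beta}M^{1-\beta},\qquad \beta=\beta(r,R,R')\in(0,1).
$$
Covering the bounded domain $\omega$ by a finite chain of balls joining $B(x_0,r_0)$ to an arbitrary point and iterating yields $\|\xi\|_{L^\infty(\omega)}\leq C_1 M^{1-\theta_1}\|\xi\|_{L^\infty(B(x_0,r_0))}^{\theta_1}$ for some $\theta_1\in(0,1)$ depending on $\omega$ and $\rho$. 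Substituting the local inequality and setting $\theta:=\theta_0\theta_1$, $C:=C_0^{\theta_1}C_1$ gives the desired bound \eqref{M}.

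The main obstacle is the local step, where the passage from a derivative bound to an $L^1$-on-measurable-set estimate is genuinely delicate: a merely measurable $\tilde\omega\cap B$ may have no interior, so there is no classical subharmonic mean-value inequality to invoke, and one must balance the polynomial degree $N$, the Taylor remainder, and the Remez factor $(|B|/|\tilde\omega\cap B|)^N$ to prevent the exponent on $M$ from degenerating. Once the local bound is in hand, the three-balls propagation and the reduction to a density point are comparatively standard.
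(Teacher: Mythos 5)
The paper never proves Lemma \ref{lemma-measurable}: it is quoted from \cite{vessella1999continuous}, with the ``simpler proof'' in \cite{apraiz2013null} cited explicitly, so there is no internal argument to compare against. Your outline reproduces precisely the strategy of those references --- holomorphic extension to a complex $c\rho$-thickening from the derivative bounds, a local estimate on a ball where $\tilde\omega$ has positive relative density obtained by truncating the Taylor expansion at degree $N$, applying a Remez/Brudnyi--Ganzburg-type inequality to the polynomial part on the measurable set, and optimizing $N$ against the remainder, followed by propagation through a chain of balls via the Hadamard three-ball inequality --- and that route does work. One step is not quite right as written: the Lebesgue density theorem gives a radius at which $|\tilde\omega\cap B(x_0,r_0)|\ge\kappa|B(x_0,r_0)|$, but that radius depends on the set $\tilde\omega$ itself, not only on $|\tilde\omega|$, $|\omega|$ and $\rho$, so you cannot claim $r_0=r_0(|\tilde\omega|,|\omega|,\rho)$ this way. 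The standard fix is a pigeonhole argument: fix $r_0\sim\rho$, cover $\omega$ by $K=K(|\omega|,\rho,n)$ such balls, and choose one carrying at least $|\tilde\omega|/K$ of the measure of $\tilde\omega$; this yields a relative density bounded below by a quantity depending only on the stated parameters, after which your local step and the chaining go through. A further caveat, which is really a defect of the lemma's statement rather than of your proof: for a general bounded domain the length of the chain of balls, hence your $\theta_1$, depends on the geometry (e.g.\ the diameter) of $\omega$ and not merely on $|\omega|$; you implicitly acknowledge this by letting $\theta_1$ depend ``on $\omega$''. The paper only ever invokes the lemma with $\omega$ an interval or a ball (cf.\ Lemma \ref{lemma-measurable-rr}), where this distinction is immaterial.
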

The estimate \eqref{M} was first established in \cite{vessella1999continuous} (see also \cite{Nadirashvili1976} and \cite{Nadirashvili1979} for other similar
results). The reader can find a simpler proof of Lemma \ref{lemma-measurable} in \cite{apraiz2013null}, building on ideas
from \cite{malinnikova2004propagation,vessella1999continuous,Nadirashvili1976}.

Lemma \ref{lem:degenerate-spectral} and Lemma \ref{lemma-measurable} imply the next result for the following deterministic degenerate equation:
\begin{equation*}
\left\{
\begin{array}{ll}
 \xi_t(x,t)-A\xi = 0, & \left( x,t\right) \in I\times(0,T),   \\[2mm]
\xi(1,t) = 0, & t\in \left(0,T\right), \\[2mm]
\mbox{and }\left\{
\begin{array}{ll}
\xi\left( 0,t\right) =0, & \quad \alpha\in(0,1), \\[2mm]
(a \xi_{x})\left( 0,t\right) =0, & \quad \alpha\in[1,2),
\end{array}
\right.
&  t\in \left( 0,T\right), \\
\\ \xi\left(x, 0\right) =\xi_{0}(x), &  x\in I\,.
\end{array}
\right.
\end{equation*}
\begin{lemma}
\label{lemma-analytic}
Set $\alpha\in(0,2)$. Let $\sigma\in(0,1)$ be defined in \eqref{u}. Then there is positive constant $C=C(I,\alpha,|G_0|)$ such that
\begin{equation}
  \label{eq:spectral}
\|\cE_\lambda \xi\|_{L^2(I)}
\leq Ce^{C\lambda^\sigma}\|\cE_\lambda \xi \|_{L^1(G_0)},~\forall\, \xi\in
L^2(I), \lambda>0.
\end{equation}
\end{lemma}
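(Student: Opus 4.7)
My plan is to upgrade the open-set spectral inequality of Lemma \ref{lem:degenerate-spectral} to the measurable-set, $L^1$-type inequality \eqref{eq:spectral} by interpolating through the propagation-of-smallness estimate of Lemma \ref{lemma-measurable}. Because $G_0\subset(\varepsilon,1)$ stays strictly away from the degeneracy at $x=0$, the operator $-A=-\partial_x(x^\alpha\partial_x\cdot)$ is uniformly elliptic on any subdomain $(\varepsilon',1)$ with $0<\varepsilon'<\varepsilon$, and this non-degeneracy is what lets us bring analytic techniques into play.

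The first step is to show that $\cE_\lambda\xi$ is real-analytic on $(\varepsilon',1)$ with quantitative derivative bounds. Since $\cE_\lambda\xi=\sum_{\lambda_j\le\lambda}\langle\xi,e_j\rangle_H e_j$ satisfies $A\cE_\lambda\xi=-\sum_{\lambda_j\le\lambda}\lambda_j\langle\xi,e_j\rangle_He_j$ with $\|A\cE_\lambda\xi\|_{L^2(I)}\le\lambda\|\cE_\lambda\xi\|_{L^2(I)}$, iterated elliptic regularity on $(\varepsilon',1)$ yields estimates of the form
\[
|\partial_x^k\cE_\lambda\xi(x)|\le C^{k+1}\lambda^{k/2}k!\,\|\cE_\lambda\xi\|_{L^2(I)},\qquad x\in(\varepsilon',1),\ k\in\N.
\]
This means the hypothesis of Lemma \ref{lemma-measurable} is met on $\omega:=(\varepsilon',1)$ with analyticity radius $\rho\sim\lambda^{-1/2}$ and constant $M=C\|\cE_\lambda\xi\|_{L^2(I)}$.

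The second step applies Lemma \ref{lemma-measurable} with $\omega=(\varepsilon',1)$ and $\tilde\omega=G_0$. The delicate point is that the exponent $\theta=\theta(|G_0|,\rho,|\omega|)$ degenerates as $\rho\to 0$. Following the scheme of \cite{apraiz2014observability,liu2023observability}, one partitions $\omega$ into finitely many subintervals of length $\sim\lambda^{-1/2}$, rescales each to unit size so that the analyticity radius becomes $O(1)$, applies the propagation-of-smallness estimate on the subinterval containing the largest share of $G_0$, and absorbs the resulting rescaling factors into an overall cost $e^{C\lambda^{1/2}}$. This yields a uniform exponent (for definiteness $\theta=1/2$) and the bound
\[
\|\cE_\lambda\xi\|_{L^\infty(\omega)}\le Ce^{C\lambda^{1/2}}\bigl(\|\cE_\lambda\xi\|_{L^2(I)}\bigr)^{1/2}\bigl(\|\cE_\lambda\xi\|_{L^1(G_0)}\bigr)^{1/2}.
\]

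The third step is to close the loop. Combining the trivial bound $\|\cE_\lambda\xi\|_{L^2(\omega)}\le|\omega|^{1/2}\|\cE_\lambda\xi\|_{L^\infty(\omega)}$ with Lemma \ref{lem:degenerate-spectral} gives
\[
\|\cE_\lambda\xi\|_{L^2(I)}\le Ce^{C\lambda^\sigma}\|\cE_\lambda\xi\|_{L^2(\omega)}\le Ce^{C(\lambda^\sigma+\lambda^{1/2})}\bigl(\|\cE_\lambda\xi\|_{L^2(I)}\bigr)^{1/2}\bigl(\|\cE_\lambda\xi\|_{L^1(G_0)}\bigr)^{1/2},
\]
and dividing by $\|\cE_\lambda\xi\|_{L^2(I)}^{1/2}$ then squaring yields \eqref{eq:spectral}, using that $\sigma\ge 3/4>1/2$ under the choice \eqref{u} so that the $\lambda^{1/2}$ term is absorbed into $\lambda^\sigma$. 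The principal obstacle is step two: without the patching/rescaling argument the $\lambda$-dependence of $\theta$ destroys the estimate, and the whole point of the reduction from $G$ to $G_0$ (carried out in the preceding remark) is precisely to provide the non-degenerate strip on which this analytic machinery can be run.
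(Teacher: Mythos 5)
Your overall architecture (open-set spectral inequality of Lemma \ref{lem:degenerate-spectral}, quantitative analyticity on the non-degenerate strip, propagation of smallness via Lemma \ref{lemma-measurable}, then interpolation and absorption) is the same as the paper's, and your first and third steps are in the right spirit. The gap is in your step two. Working directly with $\cE_\lambda\xi$ you only obtain derivative bounds with analyticity radius $\rho\sim\lambda^{-1/2}$, and, as you yourself note, the exponent $\theta=\theta(|G_0|,\rho,|\omega|)$ in Lemma \ref{lemma-measurable} then degenerates: in Vessella-type estimates $\theta$ decays like $e^{-c/\rho}=e^{-c\sqrt{\lambda}}$, so after dividing by $\|\cE_\lambda\xi\|_{L^2(I)}^{1-\theta}$ and raising to the power $1/\theta$ the constant becomes doubly exponential in $\sqrt{\lambda}$. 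Your proposed repair --- partition $\omega$ into $O(\sqrt{\lambda})$ subintervals of length $\sim\lambda^{-1/2}$, rescale, and apply propagation of smallness on the single subinterval $J$ carrying the largest share of $G_0$ --- does not deliver the inequality you then use: it bounds $\|\cE_\lambda\xi\|_{L^\infty(J)}$, not $\|\cE_\lambda\xi\|_{L^\infty(\omega)}$, and your closing chain needs the $L^2$ norm over the whole \emph{fixed} open set $\omega$, because that is where Lemma \ref{lem:degenerate-spectral} observes. Propagating from $J$ to all of $\omega$ by chaining three-interval steps across the $O(\sqrt{\lambda})$ pieces multiplies the exponents and gives $\theta_{\mathrm{eff}}\sim\theta_0^{c\sqrt{\lambda}}$, which is exactly the degeneration you were trying to avoid; replacing $\omega$ by $J$ inside Lemma \ref{lem:degenerate-spectral} is not available either, since that lemma's constant is tied to a fixed open set and is not controlled as the observation interval shrinks with $\lambda$. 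So the uniform $\theta=1/2$ you assert is unsupported, and the claimed bound $\|\cE_\lambda\xi\|_{L^\infty(\omega)}\le Ce^{C\lambda^{1/2}}\|\cE_\lambda\xi\|_{L^2(I)}^{1/2}\|\cE_\lambda\xi\|_{L^1(G_0)}^{1/2}$ does not follow.

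The paper closes exactly this gap with the elliptic-extension trick: it introduces $f(x,\mu)=\sum_{\lambda_j\le\lambda}\langle\xi,e_j\rangle_{L^2(I)}e^{\lambda_j^{\sigma}\mu}e_j(x)$, which solves the $\lambda$-free elliptic equation $\partial_\mu^2 f+\big(x^\alpha f_x\big)_x=0$ on $\tilde G\times D_R(0)$. Morrey--Nirenberg analytic estimates for this fixed equation give derivative bounds with a radius $\rho=\rho(R,\alpha)$ independent of $\lambda$; the only price is that $M=Ce^{C\lambda^{\sigma}}\|\cE_\lambda\xi\|_{L^2(I)}$, the exponential coming from $\|f\|_{L^2(I\times(-R,R))}$, and this is harmless because it enters through $M^{1-\theta}$ with a $\lambda$-independent $\theta$. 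Since $\cE_\lambda\xi=f(\cdot,0)$, Lemma \ref{lemma-measurable} then applies on all of $\tilde G$ with fixed $\theta$, and your step three goes through verbatim (indeed without any need to absorb a $\lambda^{1/2}$ term). If you want to keep your structure, replace your steps one and two by this extension argument; any alternative would require an independently proved doubling or three-ball inequality for $\cE_\lambda\xi$ with cost $e^{C\lambda^{\sigma}}$ and $\lambda$-independent exponent, which is essentially equivalent to the extension.
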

\begin{remark}
    The inequality \eqref{eq:spectral} is a specific instantiation of the spectral-like condition ${\bf(H)}$, but notable for the distinctive characteristic that the subdomain is away from the singularity point zero.
\end{remark}
\begin{proof}[Proof of Lemma \ref{lemma-analytic}]
Firstly, recalling the definition of $G_0$, we have the following cases:

(a): if $\inf G=0$, then $G_0=(\varepsilon,\sup G)\subset (\varepsilon,1)$.

(b): if $\inf G>0$, then either $G_0=(\varepsilon,\sup G)\subset (\varepsilon,1)$ or $G_0=G\subset (\varepsilon,1)$.

From either case, we know that $0\not\in\bar{G}_0$ and $G_0\subset (\varepsilon,1)$. Set $\tilde G:=(\varepsilon,1)$. Then it follows from Lemma \ref{lem:degenerate-spectral} (where $\omega$ is replaced by $\tilde G$) that we have
\begin{equation}\label{pppp}
\|\cE_\lambda \xi\|_{L^2(I)}
\leq Ce^{C\lambda^\sigma}\|\cE_\lambda \xi \|_{L^2(\tilde G)},~\forall\, \xi\in
L^2(I), \lambda>0.
\end{equation}
Let $\{e_j\}_{j\geq1}$ and $\{\lambda_j^{2\sigma}\}_{j\geq1}$ be respectively the sets of $L^2(I)$-normalized eigenfunctions and eigenvalues for $-\frac{\partial}{\partial x}\bigg(x^\alpha\frac{\partial}{\partial x}\bigg)$ with zero boundary conditions; i.e.,
\begin{equation}
\label{lemma-analytic-eq1}
\left\{
\begin{array}{ll}
\big(x^\alpha (e_j(x))_x\big)_x+\lambda^{2\sigma}_je_j(x)=0, & \text{in}~\,\, I,   \\[2mm] 
 e_j(1)= 0, \\[2mm]
\mbox{and }\left\{
\begin{array}{ll}
 e_j(0) =0, & \quad \alpha\in(0,1), \\[2mm]
(a  e_j(x)_{x})(0) =0, & \quad \alpha\in[1,2),
\end{array}
\right.
\end{array}
\right.
\end{equation}
and here, $0<\lambda_1\leq \lambda_2\leq\cdots\leq \lambda_j\leq\cdots$, and $\lim_{j\rightarrow+\infty}\lambda_j=+\infty$.

For any given $\xi\in L^2(I)$, define
\begin{equation}\label{pp}
f(x,\mu)=\sum_{\lambda_j\le\lambda} \langle\xi,e_j  \rangle_{L^2(I)}e^{\lambda_j^{\sigma}\mu} 
e_j(x),\,\,\text{for}~ \,\,x\in I,\,\,\text{and}~\,\,\mu\in\R,
\end{equation}
which, together with (\ref{lemma-analytic-eq1}), we have
\begin{equation}
\label{lemma-analytic-eq3}
\left\{
\begin{array}{ll}
\partial_\mu^2f+\frac{\partial}{\partial x}\bigg( x^\alpha\frac{\partial}{\partial x}f \bigg)=0, & \text{in}~\,\, I\times\R,   \\[2mm]
f(1)= 0,& \text{in}~\,\,  \R. \\[2mm]
\mbox{and }\left\{
\begin{array}{ll}
 f(0) =0,  \quad &\alpha\in(0,1), \\[2mm]
(a  f(x)_{x})(0) =0,  \quad &\alpha\in[1,2),
\end{array}
\right.
 & \text{in}~\,\,  \R.
\end{array}
\right.
\end{equation} 
It is easy to show that the function $x^\alpha$ is real analytic and non-degenerate in $\tilde G\times D_R(0)\subset I\times\R$ (here $D_R(0):=\{\mu\in \R:|\mu|\leq R, ~\text{for}~ R\in(0,1]\}$). By the real analytic estimates of solutions to linear elliptic equations with real analytic coefficients (cf. for instance, \cite{apraiz2013null}, Chapter 3 in \cite{john2004plane}, and Chapter 5 in \cite{morrey2009multiple}), we have that there are constants $C=C(R,\alpha)\geq1$ and $\rho=\rho(R,\alpha)\in(0,1)$ such that any solutions to equation (\ref{lemma-analytic-eq3}) satisfies,
\begin{equation}
\label{lemma-analytic-eq4}
\|\partial_x^a\partial_\mu^\gamma f\|_{L^\infty(\tilde G)}\leq\frac{C(a+\gamma)!}{(\rho)^{a+\gamma}}\bigg( \int_{\tilde G\times D_R(0)} |f|^2dxd\tau \bigg)^{\frac{1}{2}},
\end{equation}
where $a\in \N$, $\gamma\in\N$.
Thus, by \eqref{pp} and \eqref{lemma-analytic-eq4} with $\gamma=0$, $\cE_\lambda \xi = f(\cdot, 0)$  is a real-analytic function in $\tilde G$ satisfying
\begin{equation*}
\|\partial_x^a\cE_\lambda \xi\|_{L^\infty(\tilde G)}\leq\frac{Ca!}{(\rho)^a}\|f\|_{L^2\big(I\times (-R,R)\big)},~\forall\, a\in\N.
\end{equation*}
By the orthonormality of $\{e_j\}_{j\geq1}$ in $L^2(I)$ and \eqref{pp}, there is $C=C(I)$ such that
\begin{equation*}
\|f\|_{L^2\big(I\times (-R,R)\big)}\le Ce^{C\lambda^{\sigma}}\|\cE_\lambda \xi\|_{L^2(I)}.
\end{equation*}
The last two inequalities show that
\begin{equation*}
\|\partial_x^a\cE_\lambda \xi\|_{L^\infty(\tilde G)}\leq\frac{Ce^{C\lambda^{\sigma}}a!}{(\rho)^a} \|\cE_\lambda \xi\|_{L^2(I)},~\forall\, a\in\N.
\end{equation*}
In particular, $\cE_\lambda \xi$ satisfies the hypothesis in Lemma \ref{lemma-measurable} (where $\tilde\omega$ is replaced by $G_0$) with
\begin{equation*}
M=Ce^{C\lambda^{\sigma}}\|\cE_\lambda \xi\|_{L^2(I)},
\end{equation*}
and there are $C=C(|G_0|,\rho,\sigma)$ and $\theta=\theta(|G_0|,\rho,\sigma)\in(0,1)$ such that
\begin{equation}\label{ppp}
\|\cE_\lambda \xi\|_{L^\infty(\tilde G)}\leq Ce^{C\lambda^{\sigma}}\|\cE_\lambda \xi\|_{L^1(G_0)}^{\theta}\|\cE_\lambda \xi\|_{L^2(I)}^{1-\theta},
\end{equation}
which, along with \eqref{pppp}, implies that $$
\|\cE_\lambda \xi\|_{L^2(I)}
\leq Ce^{C\lambda^\sigma}\|\cE_\lambda \xi\|_{L^1(G_0)}^{\theta}\|\cE_\lambda \xi\|_{L^2(I)}^{1-\theta},~\forall\, \xi\in
L^2(I), \lambda>0.
$$ 
This deduces that the estimate \eqref{eq:spectral} holds. The proof is completed.
\end{proof}

Set
$$\tau =\|F\|^2_{L^\infty_{\F}(0,T;\R)}.$$
Next, similar to the proof of Lemma \ref{decay}, we have
\begin{corollary}\label{co-degenerate-decay}
For any $\alpha\in(0,2)$ and given any $\eta\in L^2_{\mathcal{F}_T}(\Omega;L^2(I))$,
    we have for each $t\in [0,T]$,
  \begin{equation}
    \label{eq:degenerate-decay}
    \E \|z(t ;T,\cE_\lambda^\bot \eta)\|_{L^2(I)}^2 \le e^{(-2\lambda+\tau)(T-t)} \E \|\eta\|_{L^2(I)}^2.
  \end{equation}
\end{corollary}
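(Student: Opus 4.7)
The plan is to mimic the proof of Lemma \ref{decay} line by line, with $H$ replaced by $L^2(I)$ and the generic generator $A$ replaced by the degenerate operator $A v = (a v_x)_x$ on its domain. The key observation is that the argument in Lemma \ref{decay} uses nothing about $A$ beyond the fact that $\{e_j\}$ are an orthonormal basis of eigenfunctions of $-A$ with positive eigenvalues $\{\lambda_j\}$, together with the well-posedness of the backward equation \eqref{eq:degenerate-adjoint} in $L^2_{\mathbb{F}}(\Omega;C([0,T];L^2(I))) \times L^2_{\mathbb{F}}(0,T;L^2(I))$. Both ingredients are available here: the spectral decomposition of the degenerate operator is set up in \eqref{lemma-analytic-eq1} (with eigenvalues denoted $\lambda_j^{2\sigma}$, but the notation is immaterial), and the well-posedness in the required spaces follows from the references cited after \eqref{eq:degenerate-main}.

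First I would apply the Itô formula to the process $e^{(2\lambda-\tau)(T-t)}\|z(t;T,\cE_\lambda^\bot\eta)\|_{L^2(I)}^2$ and integrate from $t$ to $T$, exactly as in Lemma \ref{decay}. After taking expectations the stochastic integral drops out, leaving an energy identity in which the pairing $2\langle z, A z \rangle_{L^2(I)}$ appears. Expanding $z(\cdot;T,\cE_\lambda^\bot\eta) = \sum_{\lambda_j > \lambda} z_j e_j$ as in \eqref{eq:2}, I can rewrite
\[
2\langle z, A z \rangle_{L^2(I)} = -2\sum_{\lambda_j>\lambda}\lambda_j z_j^2 \le -2\lambda\,\|z(t;T,\cE_\lambda^\bot\eta)\|_{L^2(I)}^2,
\]
which is the only place the spectral cut-off $\cE_\lambda^\bot$ enters the estimate.

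Next I would use the bound $|F(s)| \le \|F\|_{L^\infty_{\F}(0,T;\R)}$ on the cross term $-2F(s)\langle Z(s), z\rangle_{L^2(I)}$ and complete the square against $\|Z\|_{L^2(I)}^2$, exactly as in the chain of displayed inequalities in Lemma \ref{decay}. With the choice $\tau = \|F\|^2_{L^\infty_{\F}(0,T;\R)}$, the remaining quadratic form is nonnegative, which yields
\[
\E\|\cE_\lambda^\bot \eta\|_{L^2(I)}^2 - e^{(2\lambda-\tau)(T-t)}\E\|z(t;T,\cE_\lambda^\bot\eta)\|_{L^2(I)}^2 \ge 0,
\]
and rearranging, together with $\E\|\cE_\lambda^\bot \eta\|_{L^2(I)}^2 \le \E\|\eta\|_{L^2(I)}^2$, gives \eqref{eq:degenerate-decay}. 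No new obstacle appears relative to Lemma \ref{decay}; the mild subtlety is only that one should justify the Itô expansion on $\mathcal{D}(A)$ and pass to general $\eta \in L^2_{\mathcal{F}_T}(\Omega;L^2(I))$ by a standard density argument, using the well-posedness result quoted from \cite{liu2019carleman}. Thus the corollary will follow as a direct adaptation of Lemma \ref{decay}.
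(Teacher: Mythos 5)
Your proposal is correct and follows essentially the same route as the paper, which itself only states that the corollary is obtained ``similar to the proof of Lemma \ref{decay}'': apply It\^o's formula to $e^{(2\lambda-\tau)(T-t)}\|z(t;T,\cE_\lambda^\bot\eta)\|_{L^2(I)}^2$, use the eigenfunction expansion to absorb the $\langle z,Az\rangle$ term via $\lambda_j>\lambda$, and complete the square with $\tau=\|F\|^2_{L^\infty_\F(0,T;\R)}$. Your added remarks on the density argument and on the immateriality of the $\lambda_j^{2\sigma}$ labelling are accurate and do not change the argument.
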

Then along with Corollary
\ref{co-degenerate-decay} and Lemma \ref{lemma-analytic}, we have the following  interpolation inequality:
\begin{corollary}\label{co-degenerate-inter}
  Let $\alpha\in(0,2)$ and $\sigma\in(0,1)$ be defined in (\ref{u}). Given any $\eta\in L^2_{\mathcal{F}_T}(\Omega;L^2(I))$, and $t\in [0,T)$, there
  exists
 a constant $K = K(T,I,G_0,\alpha,\sigma)$ such that
\begin{equation}
  \label{eq:degenerate-inter}
  \E \|z(t;T,\eta)\|_{L^2(I)}^2 \le Ke^{K(T-t)^{\frac{\sigma}{\sigma-1}}} \big(\E \|z(t;T,\eta)\|^2_{L^2(G_0)}\big)^{\frac{1}{2}}\big(\E \|\eta\|_{L^2(I)}^2\big)^{\frac{1}{2}}.
\end{equation}
\end{corollary}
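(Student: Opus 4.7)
The proposal is to prove Corollary \ref{co-degenerate-inter} by specializing the abstract interpolation argument used for Proposition \ref{interpolation} to the concrete setting $H=U=L^2(I)$ with observation operator $B=\chi_{G_0}$, so that $B^{*}\xi=\chi_{G_0}\xi$ and $\|B^{*}\xi\|_{U}=\|\xi\|_{L^{2}(G_0)}$. The two ingredients one feeds into that template are the spectral-like inequality of Lemma \ref{lemma-analytic} and the high-frequency decay of Corollary \ref{co-degenerate-decay}; once both are in place, the proof is almost a transcription of Proposition \ref{interpolation}'s proof with the exponent $\gamma$ replaced by $\sigma$.

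First, I would convert Lemma \ref{lemma-analytic} to an $L^{2}(G_{0})$-form that matches condition $\mathbf{(H)}$: since $G_0 \subset I$ has finite measure, Cauchy--Schwarz gives $\|\cE_\lambda \xi\|_{L^{1}(G_0)} \le |G_0|^{1/2}\|\cE_\lambda \xi\|_{L^{2}(G_0)}$, and combining this with \eqref{eq:spectral} yields a constant $N=N(I,\alpha,|G_0|)$ such that
\begin{equation*}
\|\cE_\lambda \xi\|_{L^{2}(I)} \le N e^{N\lambda^{\sigma}}\|\chi_{G_0}\cE_\lambda \xi\|_{L^{2}(I)},\qquad \forall\,\xi\in L^2(I),\ \lambda>0.
\end{equation*}
This is exactly hypothesis $\mathbf{(H)}$ with $\gamma=\sigma$.

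Next, writing $z(t):=z(t;T,\eta)$ and decomposing $z(t)=\cE_\lambda z(t)+\cE_\lambda^\bot z(t)$, I would apply the spectral inequality above to $\cE_\lambda z(t)$, then split $\|\chi_{G_0}\cE_\lambda z(t)\|_{L^{2}(I)}^{2}\le 2\bigl(\|\chi_{G_0} z(t)\|_{L^{2}(I)}^{2}+\|\chi_{G_0}\cE_\lambda^\bot z(t)\|_{L^{2}(I)}^{2}\bigr)$ and use the trivial bound $\|\chi_{G_0}\cE_\lambda^\bot z(t)\|_{L^{2}(I)}\le \|\cE_\lambda^\bot z(t)\|_{L^{2}(I)}$. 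Taking expectations and invoking the decay estimate \eqref{eq:degenerate-decay} from Corollary \ref{co-degenerate-decay} then gives an inequality of the form
\begin{equation*}
\E\|z(t)\|_{L^{2}(I)}^{2} \le 2N e^{N\lambda^{\sigma}}e^{\tau T}\Bigl(\E\|z(t)\|_{L^{2}(G_0)}^{2} + e^{-2\lambda(T-t)}\E\|\eta\|_{L^{2}(I)}^{2}\Bigr),
\end{equation*}
valid for every $\lambda>0$.

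Finally, I would optimize in $\lambda$ by using the elementary bound
\begin{equation*}
\max_{\lambda>0}\bigl\{N\lambda^{\sigma}-\lambda(T-t)\bigr\}\le K(T-t)^{\frac{\sigma}{\sigma-1}}
\end{equation*}
for a suitable $K=K(T,N,\sigma)$, factor out $e^{\pm\lambda(T-t)}$ to rewrite the bound in the $\varepsilon$-parametrized form $\E\|z(t)\|_{L^{2}(I)}^{2}\le Ke^{K(T-t)^{\sigma/(\sigma-1)}}\bigl(\varepsilon^{-1}\E\|z(t)\|_{L^{2}(G_0)}^{2}+\varepsilon\,\E\|\eta\|_{L^{2}(I)}^{2}\bigr)$, extend its validity to all $\varepsilon>0$ by the crude bound $\E\|z(t)\|_{L^{2}(I)}^{2}\le C(T)\E\|\eta\|_{L^{2}(I)}^{2}$, and minimize in $\varepsilon$ to obtain \eqref{eq:degenerate-inter}. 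No genuine obstacle is expected: every step is a direct analog of the proof of Proposition \ref{interpolation}; the only care needed is the $L^{1}\to L^{2}$ upgrade at the beginning and tracking that the dependence on $|G_0|$ (and hence on $\varepsilon,|G|$) is absorbed into the final constant $K$.
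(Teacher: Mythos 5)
Your proposal is correct and follows essentially the same route the paper intends: the paper gives no separate proof of Corollary \ref{co-degenerate-inter}, simply asserting that it follows from Corollary \ref{co-degenerate-decay} and Lemma \ref{lemma-analytic} by the argument of Proposition \ref{interpolation}, which is precisely what you carry out (with the minor, correct extra step of upgrading the $L^1(G_0)$ spectral bound to an $L^2(G_0)$ bound via Cauchy--Schwarz so that it matches condition ${\bf(H)}$ with $\gamma=\sigma$).
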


Therefore, by employing the techniques outlined in Theorem \ref{thm:main}, in addition to the telescoping series method, we are now ready to prove Theorem \ref{thm:degenerate-obs-inq}.
\begin{proof}[Proof of Theorem \ref{thm:degenerate-obs-inq}]
Similar to the proof of \eqref{qwe}, we have 
\begin{equation*}
\E \|z(\ell_1)\|_{L^2(I)}^2 \le C e^{(C+\frac{1}{2})(l_2-l_1)^{\frac{\sigma}{\sigma-1}}}
\left(\int_{\ell_1}^\ell \chi_{E} \big(\E\|\chi_{G_0}z(t)\|_{L^2(I)}^2\big)^{\frac{1}{2}} dt\right)^2,   
\end{equation*}
which, together with the fact that $G_0\subset G$, it implies the desired estimate \eqref{eq:degenerate-obs-inq}. The proof is completed.
\end{proof}

\subsection{Stochastic fourth order parabolic equations}
In this subsection, we discuss the controllability for a stochastic fourth order parabolic equation.

Let $D$ be a bounded domain of $\R^n, n\geq1$, with boundary $\partial D$ of class $C^2$. The spaces $H=U=L^2(D)$, $F(t)\in L^\infty_{\F}(0,T;\R)$, observation operator $B=\chi_G$, denoting the characteristic function of $G$, where $G\subset D$ is a  measurable subset with positive measure. $B_R(x_0)$ stands for the ball centered at $x_0$ in $\R^n$ of radius $R$ and $B_R=B_R(0)$.

Let $A$ be an unbounded linear operator on $L^2(D)$:
$$
\left\{
\begin{array}{ll}
\mathcal{D}(A):=H^4(D)\cap H_0^2(D),\\
Av :=  -\Delta^2v,\ \forall v\in\mathcal{D}(A).
\end{array}
\right.
$$
Consider initial state $y_0 \in L^2_{\mathcal{F}_0}(\Omega;L^2(D))$, $y$ is the $L^2(D)$-valued state variable, $u$ is the control variable and $u\in L^\infty_\F(0,T;L^2(\Omega; L^2(D)))$. The set $E$ is a measurable subset of $[0, T ]$ with positive measure.

In present subsection, we consider the following forward stochastic fourth order parabolic equation:
\begin{equation}
\label{eq:fourth-main}
\left\{
\begin{array}{ll}
d y = A ydt + \chi_E\chi_G u dt + F(t)y dW(t), & \mbox{in}\,\, D\times(0,T),   \\[2mm]
y=\Delta y = 0, & \mbox{on}\,\, \partial D\times(0,T), \\[2mm]
y(0) =y_{0}, &  \mbox{in}\,\, D\,.
\end{array}
\right.
\end{equation}

By the classical well-posedness result for stochastic evolution equations, we know that equation (\ref{eq:fourth-main}) admits a unique weak solution $y\in L^2_{\mathbb{F}}(\Omega;C([0,T];L^2(D)))\cap L^2_\F(0,T;H^2(D)\cap H_0^1(D))$, see \cite{lv2022null}.

The main result concerns the null controllability property of the equation (\ref{eq:fourth-main}) as follows:
\begin{theorem}
  \label{thm:fourth-main}
Let $D$ be a bounded domain of $\R^n, n\geq1$, with boundary $\partial D$ of class $C^2$. Let $x_0\in D$ and $R\in(0,1]$ such that $B_{4R}(x_0)\subseteq D$. Suppose $G\subset D$ is a measurable subset with positive measure, contained in $B_{R}(x_0)\subseteq D$. Supposed $E$ is measurable subset of $[0,T]$ with positive measure. Then the equation \eqref{eq:fourth-main} is null controllable at any time $T>0$. That is, for each initial data $y_0\in L^2_{\mathcal{F}_0}(\Omega;L^2(D))$, there
is a control $u\in L^\infty_\F(0,T;L^2(\Omega; L^2(D)))$ such that
the solution $y$ of the stochastic fourth order parabolic equations \eqref{eq:fourth-main} satisfies $y(T) = 0$
in $D$, $\bP$-a.s. Moreover, there exists a constant $C=C(T,D,|E|,|G|,R)>0$  and the control $u$ satisfies the following
estimate
\begin{equation}
  \label{eq:fourth-control est}
  \|u\|_{L^\infty_\F(0,T;L^2(\Omega; L^2(D)))} \le C  \|y_0\|_{L^2_{\mathcal{F}_0}(\Omega;L^2(D))}.
\end{equation}
\end{theorem}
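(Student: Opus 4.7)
The plan is to follow the blueprint of Section 3.1. By the standard duality argument (e.g.\ \cite[Theorem 7.17]{lv2021mathematical}), Theorem \ref{thm:fourth-main} together with the control estimate \eqref{eq:fourth-control est} is equivalent to the observability inequality
\[
\|z(0;T,\eta)\|_{L^2_{\cF_0}(\Omega;L^2(D))}\le C\|\chi_E\chi_G z(\cdot;T,\eta)\|_{L^1_\F(0,T;L^2(\Omega;L^2(D)))}
\]
for every $\eta\in L^2_{\cF_T}(\Omega;L^2(D))$, where $(z,Z)$ solves the backward adjoint of \eqref{eq:fourth-main}. I will obtain this inequality as a direct application of Theorem \ref{thm:main} with $H=U=L^2(D)$ and $B=\chi_G$; thus the entire task reduces to verifying the spectral-like condition $\mathbf{(H)}$ for $-A=\Delta^2$ (under the boundary conditions $y=\Delta y=0$) and the measurable observation set $G$.

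Because of the imposed boundary conditions, the eigenpairs of $-A$ are $\{(\mu_j^4,\phi_j)\}_{j\ge 1}$, where $\{(\mu_j^2,\phi_j)\}_{j\ge 1}$ are the Dirichlet eigenpairs of $-\Delta$ on $D$. Consequently, the projector $\cE_\lambda$ of Section 2 sums over those $j$ with $\mu_j\le\lambda^{1/4}$, so I anticipate condition $\mathbf{(H)}$ to hold with $\gamma=1/4\in(0,1)$. The verification mirrors the proof of Lemma \ref{lemma-analytic}. First, on the open ball $\tilde G:=B_{2R}(x_0)\subseteq D$, the classical Lebeau--Robbiano spectral inequality for the Dirichlet Laplacian yields $\|\cE_\lambda f\|_{L^2(D)}\le Ce^{C\lambda^{1/4}}\|\cE_\lambda f\|_{L^2(\tilde G)}$. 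Second, to pass from this open set down to the measurable subset $G\subseteq B_R(x_0)$, I introduce the auxiliary function
\[
\Phi(x,s):=\sum_{\mu_j\le\lambda^{1/4}}\langle f,\phi_j\rangle\cosh(\mu_j s)\phi_j(x),\qquad (x,s)\in D\times\R,
\]
which satisfies $\Phi(\cdot,0)=\cE_\lambda f$ and $(\partial_s^2+\Delta_x)\Phi=0$. Hence $\Phi$ is harmonic on the $(n+1)$-dimensional cylinder $D\times\R$ and real analytic in its interior. Standard interior estimates for harmonic functions on $B_{3R}(x_0)\times(-R,R)$ (safely contained in $D\times\R$ by the hypothesis $B_{4R}(x_0)\subseteq D$), together with the orthonormality of $\{\phi_j\}$ in $L^2(D)$, provide analytic bounds of the form
\[
\|\partial_x^a \cE_\lambda f\|_{L^\infty(B_{3R}(x_0))}\le \frac{Ce^{C\lambda^{1/4}}a!}{\rho^{|a|}}\|\cE_\lambda f\|_{L^2(D)},\qquad a\in\N^n,
\]
for some $\rho\in(0,1)$. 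Lemma \ref{lemma-measurable} applied with $\tilde\omega=G$ and $M=Ce^{C\lambda^{1/4}}\|\cE_\lambda f\|_{L^2(D)}$, combined with the previous display to absorb the factor $\|\cE_\lambda f\|_{L^2(D)}^{1-\theta}$, then produces condition $\mathbf{(H)}$ with $\gamma=1/4$, and Theorem \ref{thm:main} completes the proof.

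The main technical obstacle I anticipate is the quantitative bookkeeping in the second step: the weights $\cosh(\mu_j s)\sim e^{\mu_j|s|}$ appearing in $\Phi$ grow (over $\mu_j\in[0,\lambda^{1/4}]$) up to $e^{\lambda^{1/4}|s|}$, so I must track this growth carefully to ensure that $\|\Phi\|_{L^2(B_{3R}(x_0)\times(-R,R))}\le Ce^{C\lambda^{1/4}}\|\cE_\lambda f\|_{L^2(D)}$, whereby the exponent $\gamma=1/4$ materialises in the final spectral inequality rather than a larger power of $\lambda$. Beyond this issue, which is a verbatim analogue of the corresponding step in Lemma \ref{lemma-analytic}, no new conceptual difficulty arises, since the domain is non-degenerate and the Lebeau--Robbiano inequality on open sets is classical.
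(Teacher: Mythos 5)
Your proposal follows essentially the same route as the paper: duality reduces the theorem to the observability inequality of Theorem \ref{thm:fourth-obs-inq}, which is then obtained via the machinery of Theorem \ref{thm:main} once the spectral-like condition ${\bf(H)}$ with $\gamma=\tfrac14$ is verified by combining a Lebeau--Robbiano-type spectral inequality on a ball with the propagation-of-smallness estimate for analytic functions on measurable sets (the paper quotes the fourth-order spectral inequality from \cite{buffe2018spectral} as Lemma \ref{lem:fourth-spectral-1} and refers to Theorem 5 of \cite{apraiz2014observability} for the passage to measurable $G$, which is precisely the harmonic-extension argument you sketch). The only cosmetic difference is that you derive the $e^{C\lambda^{1/4}}$ bound from the identification of the eigenfunctions of $\Delta^2$ under the conditions $y=\Delta y=0$ with the Dirichlet Laplacian eigenfunctions, rather than citing the fourth-order spectral inequality directly.
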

\begin{remark}
It is noteworthy that the authors in \cite{lv2022null} examine the controllability outcomes of a stochastic fourth order parabolic equation with the influence of two controls in the space of $L^2_\F(0,T;L^2(G))\times L^2_\F(0,T;L^2(D))$, over the time interval $(0, T)$. In contrast, our stochastic fourth order parabolic equation (\ref{eq:fourth-main}) not only necessitates only one control in the space of $L^\infty_\F(0,T;L^2(\Omega; L^2(D)))$ in the drift term, but also attains null controllability results over measurable sets.
\end{remark}

Similar discussion to subsection \ref{subsec:degenerate}, we introduce the following backward stochastic equation:
\begin{equation}
\label{eq:fourth-adjoint}
\left\{
\begin{array}{ll}
d z = -A zdt - F(t)Z dt + Z dW(t), & \mbox{in}\,\, D\times(0,T),   \\[2mm]
z=\Delta z = 0, & \mbox{on}\,\, \partial D\times(0,T), \\[2mm]
z(T) =\eta, &  \mbox{in}\,\, D\,,
\end{array}
\right.
\end{equation}
where $\eta\in L^2_{\mathcal{F}_T}(\Omega;L^2(D))$.

By standard duality argument, see Theorem 7.17 in \cite{lv2021mathematical}, Theorem \ref{thm:fourth-main} is deduced by the following observability inequality for
the backward stochastic  equation (\ref{eq:fourth-adjoint}):
\begin{theorem}
  \label{thm:fourth-obs-inq}
Let $D$ be a bounded domain of $\R^n, n\geq1$, with boundary $\partial D$ of class $C^2$. Let $x_0\in D$ and $R\in(0,1]$ such that $B_{4R}(x_0)\subseteq D$. Suppose $G\subset D$ is a measurable subset with positive measure, contained in $B_{R}(x_0)\subseteq D$. Supposed $E$ is measurable subset of $[0,T]$ with positive measure. Then there exists a constant $C=C(T,D,|E|,|G|,R)>0$ such that the following observability inequality holds: for any $\eta\in L^2_{\mathcal{F}_T}(\Omega;L^2(D))$,
\begin{equation}
  \label{eq:fourth-obs-inq}
  \|z(0;T,\eta)\|_{L^2_{\mathcal{F}_0}(\Omega;L^2(D))} \le C \|\chi_E\chi_Gz(\cdot;T,\eta)\|_{L^1_\F(0,T;L^2(\Omega; L^2(D)))}.
\end{equation}
\end{theorem}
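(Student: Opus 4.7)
The plan is to fit the backward problem \eqref{eq:fourth-adjoint} into the abstract framework of Theorem \ref{thm:main} by taking $H=U=L^2(D)$ together with the observation operator $B=\chi_G\in\cL(L^2(D),L^2(D))$, for which $B^*=\chi_G$, so that the right-hand side of \eqref{thm:main-ob} reproduces that of \eqref{eq:fourth-obs-inq}. The entire task thus reduces to verifying the spectral-like hypothesis ${\bf(H)}$ for $A=-\Delta^2$ under the Navier boundary conditions $v=\Delta v=0$ on $\partial D$, when $G$ is a measurable subset of $B_R(x_0)\subset B_{4R}(x_0)\subset D$.

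The key structural remark is that the eigenpairs of $-A$ are $\{(\lambda_j^2,e_j)\}_{j\ge1}$, where $\{(\lambda_j,e_j)\}_{j\ge1}$ denote the Dirichlet eigenpairs of $-\Delta$ on $D$. Consequently, the spectral projector $\cE_\Lambda$ for $-A$ coincides with the Dirichlet projector $\cE^{\mathrm{Dir}}_{\sqrt\Lambda}$, and ${\bf(H)}$ with $\gamma=1/4$ is equivalent to a Lebeau--Robbiano-type spectral inequality from the measurable set $G$,
\[
\|\cE^{\mathrm{Dir}}_\mu\xi\|_{L^2(D)}\le Ce^{C\sqrt{\mu}}\|\cE^{\mathrm{Dir}}_\mu\xi\|_{L^2(G)},\qquad \forall\,\xi\in L^2(D),\ \mu>0.
\]
For an interior open ball such as $\omega=B_{2R}(x_0)$ this is the classical Lebeau--Robbiano inequality. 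To upgrade the observation from $\omega$ to the measurable subset $G\subset\omega$, I would mimic Lemma \ref{lemma-analytic}: introduce the auxiliary function
\[
f(x,s)=\sum_{\lambda_j\le\mu}\langle\xi,e_j\rangle_{L^2(D)}\,e^{\sqrt{\lambda_j}\,s}\,e_j(x),
\]
which solves the elliptic equation $\partial_s^2 f+\Delta_x f=0$ in $D\times\R$ with zero Dirichlet trace on $\partial D\times\R$, coincides with $\cE^{\mathrm{Dir}}_\mu\xi$ at $s=0$, and whose $L^2$-norm on $D\times(-R,R)$ is bounded by $Ce^{C\sqrt\mu}\|\cE^{\mathrm{Dir}}_\mu\xi\|_{L^2(D)}$.

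Interior real-analytic estimates for elliptic equations with analytic coefficients then yield Cauchy-type bounds of the form $\|\partial_x^a\cE^{\mathrm{Dir}}_\mu\xi\|_{L^\infty(B_{2R}(x_0))}\le M\rho^{-|a|}a!$ with $M=Ce^{C\sqrt\mu}\|\cE^{\mathrm{Dir}}_\mu\xi\|_{L^2(D)}$, in direct analogy with \eqref{lemma-analytic-eq4}; the inclusion $B_{4R}(x_0)\subset D$ provides the room needed for the propagation. Lemma \ref{lemma-measurable} applied to $G\subset B_R(x_0)\subset B_{2R}(x_0)$ then produces a three-ball-style inequality
\[
\|\cE^{\mathrm{Dir}}_\mu\xi\|_{L^\infty(B_{2R}(x_0))}\le Ce^{C\sqrt\mu}\|\cE^{\mathrm{Dir}}_\mu\xi\|_{L^1(G)}^{\theta}\|\cE^{\mathrm{Dir}}_\mu\xi\|_{L^2(D)}^{1-\theta},
\]
which, combined with Lebeau--Robbiano on the open ball $B_{2R}(x_0)$ and absorption of the power $1-\theta$, delivers the measurable-set spectral inequality, thereby establishing ${\bf(H)}$ with $\gamma=1/4$.

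Once ${\bf(H)}$ is in place, a direct application of Theorem \ref{thm:main} to \eqref{eq:fourth-adjoint} produces the desired observability inequality \eqref{eq:fourth-obs-inq}, with the constant $C$ depending on $T$, $D$, $R$, $|E|$ and $|G|$ through the constants of Theorem \ref{thm:main} and of the spectral step. The main technical hurdle I foresee is the clean transfer from open to measurable observation set while preserving the $\sqrt\mu$ exponent, so that the resulting $\gamma=1/4$ lies in $(0,1)$ as required by the abstract framework; this is where the elliptic extension $f(x,s)$, the real-analytic Cauchy estimates on the nested balls $B_R(x_0)\subset B_{2R}(x_0)\subset B_{4R}(x_0)\subset D$, and Lemma \ref{lemma-measurable} must be combined carefully, in complete analogy with the argument developed for the degenerate case in Lemma \ref{lemma-analytic}.
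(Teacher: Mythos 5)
Your proposal is correct and follows essentially the same route as the paper: establish the spectral-like condition ${\bf(H)}$ with $\gamma=1/4$ by combining an open-ball spectral inequality with the propagation-of-smallness estimate for real-analytic functions on measurable sets (the paper's Lemmas \ref{lem:fourth-spectral-1} and \ref{lemma-measurable-rr}, yielding Lemma \ref{lem:fourth-spectral}), and then invoke the decay estimate, interpolation inequality, and telescoping-series argument of Theorem \ref{thm:main}. The only cosmetic difference is that the paper quotes the $e^{C\lambda^{1/4}}$ spectral inequality for the fourth-order operator directly from \cite{buffe2018spectral}, whereas you derive it from the Lebeau--Robbiano inequality for the Dirichlet Laplacian via the observation that, under the Navier conditions $z=\Delta z=0$, the bi-Laplacian shares its eigenfunctions with $-\Delta$ with squared eigenvalues.
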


Now recall an important spectral inequality used later in this subsection; see Section 3 in \cite{buffe2018spectral}, also see \cite{apraiz2013null,le2019spectral}.
\begin{lemma}
  \label{lem:fourth-spectral-1}
  For each $R\in(0,1]$. There exists constant $C=C(D,R)>0$ such that when $B_{4R}(x_0)\subset D$,
\begin{equation}
  \label{eq:fourth-spectral-1}
\|\cE_\lambda \xi\|_{L^2(D)}
\leq Ce^{C\lambda^{\frac{1}{4}}}\|\cE_\lambda \xi \|_{L^2(B_R(x_0))},\  \forall\, \xi\in
L^2(D), \ \lambda>0.
\end{equation}
\end{lemma}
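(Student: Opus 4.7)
The plan is to reduce Lemma~\ref{lem:fourth-spectral-1} to the classical Lebeau--Robbiano spectral inequality for the Dirichlet Laplacian. The hinged boundary conditions $v=\Delta v=0$ are tailored to the factorization $\Delta^2=(-\Delta)(-\Delta)$, so the first step is to identify the eigensystem. Let $\{\phi_n\}_{n\geq 1}$ be the $L^2(D)$-normalized eigenfunctions of $-\Delta$ on $D$ with Dirichlet boundary conditions, with eigenvalues $0<\mu_1\leq\mu_2\leq\cdots$. Each $\phi_n$ automatically satisfies $\phi_n|_{\partial D}=0$ and $\Delta\phi_n|_{\partial D}=-\mu_n\phi_n|_{\partial D}=0$, so $\phi_n\in\mathcal{D}(A)$ and $-A\phi_n=\Delta^2\phi_n=\mu_n^2\phi_n$. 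Conversely, if $u\in\mathcal{D}(A)$ satisfies $\Delta^2 u=\lambda u$, expanding $u=\sum a_n\phi_n$ gives $-\Delta u=\sum a_n\mu_n\phi_n$, and a second application of $-\Delta$ forces $a_n\mu_n^2=\lambda a_n$ for every $n$, so only modes with $\mu_n=\sqrt\lambda$ contribute. Consequently the eigenvalues of $-A$ are $\{\mu_n^2\}_{n\geq 1}$ with the same eigenvectors, and the projector $\cE_\lambda$ in the statement coincides with the Dirichlet Laplacian spectral projector onto frequencies $\leq\sqrt\lambda$.

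The second step is to invoke the Lebeau--Robbiano spectral inequality for the Dirichlet Laplacian: provided $B_{4R}(x_0)\subseteq D$, there exists $C=C(D,R)$ such that
\[
\bigg\|\sum_{\mu_n\leq\mu}\langle g,\phi_n\rangle_{L^2(D)}\phi_n\bigg\|_{L^2(D)}
\leq Ce^{C\sqrt{\mu}}
\bigg\|\sum_{\mu_n\leq\mu}\langle g,\phi_n\rangle_{L^2(D)}\phi_n\bigg\|_{L^2(B_R(x_0))}
\]
for every $\mu>0$ and every $g\in L^2(D)$. The geometric buffer from $B_R(x_0)$ to $B_{4R}(x_0)$ is what the underlying Carleman or three-balls estimate needs. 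Setting $\mu=\sqrt\lambda$ and using the identification from the first step yields the exponent $e^{C\lambda^{1/4}}$ claimed in \eqref{eq:fourth-spectral-1}.

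The hard part is the Lebeau--Robbiano inequality itself. The standard proof extends a spectral truncation $\cE_\lambda g$ to the solution
\[
F(x,s)=\sum_{\mu_n\leq\mu}\frac{\sinh(\sqrt{\mu_n}\,s)}{\sqrt{\mu_n}}\langle g,\phi_n\rangle_{L^2(D)}\phi_n(x)
\]
of the elliptic equation $(\partial_s^2+\Delta)F=0$ on $D\times\R$, with the trivial a priori bound $\|F\|_{L^2(B_{2R}(x_0)\times(-1,1))}\leq Ce^{C\sqrt{\mu}}\|\cE_\lambda g\|_{L^2(D)}$, and then invokes a quantitative unique continuation (Carleman) estimate for $\partial_s^2+\Delta$ applied to $F$ to recover smallness on $D\times\{0\}$ from smallness on $B_R(x_0)\times\{0\}$; reading off at $s=0$ via $\partial_s F(\cdot,0)=\sum_{\mu_n\leq\mu}\langle g,\phi_n\rangle_{L^2(D)}\phi_n$ delivers the stated inequality. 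Since this step is precisely the content of Section~3 of \cite{buffe2018spectral} (see also \cite{apraiz2013null,le2019spectral}), we import it as a black box and the reduction above closes the argument.
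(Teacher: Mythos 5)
The paper does not actually prove this lemma; it imports it wholesale from Section~3 of \cite{buffe2018spectral} (see also \cite{apraiz2013null,le2019spectral}). Your proposal therefore takes a genuinely different route: you reduce the fourth-order spectral inequality to the classical Lebeau--Robbiano inequality for the Dirichlet Laplacian by observing that, under the boundary conditions $y=\Delta y=0$ appearing in \eqref{eq:fourth-main} and \eqref{eq:fourth-adjoint}, the operator $\Delta^2$ is the square of the Dirichlet Laplacian, so its eigenfunctions are the $\phi_n$ with eigenvalues $\mu_n^2$ and $\cE_\lambda$ is the Laplacian projector onto frequencies $\mu_n\le\sqrt\lambda$; substituting $\mu=\sqrt\lambda$ turns $e^{C\sqrt\mu}$ into $e^{C\lambda^{1/4}}$. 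The eigensystem identification is correct as you argue it (the two integrations by parts are justified because both $u$ and $\Delta u$ vanish on $\partial D$), your extension $F$ does solve $(\partial_s^2+\Delta)F=0$ with $\partial_s F(\cdot,0)=\cE_\lambda g$, and importing the Laplacian Lebeau--Robbiano inequality as a black box is legitimate. What your route buys is economy: you only need the scalar second-order result rather than a spectral inequality for the bi-Laplacian itself.

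One caveat you should make explicit. Your step ``$\Delta\phi_n|_{\partial D}=0$, so $\phi_n\in\mathcal{D}(A)$'' is incompatible with the domain as literally written in the paper, namely $\mathcal{D}(A)=H^4(D)\cap H_0^2(D)$: membership in $H_0^2(D)$ also forces $\partial_\nu\phi_n=0$ on $\partial D$, which fails for Dirichlet eigenfunctions (Hopf's lemma gives $\partial_\nu\phi_1<0$). For the clamped domain $H_0^2$ the operator $\Delta^2$ is \emph{not} the square of the Dirichlet Laplacian, the eigenfunctions are different, and your reduction collapses; that case is precisely why the cited spectral inequality for the bi-Laplace operator is a nontrivial theorem. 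So your argument proves the lemma for the hinged (Navier) realization consistent with the boundary conditions displayed in the PDEs, and you should either state that you are working with that realization (with domain $\{v\in H^4(D): v=\Delta v=0 \text{ on } \partial D\}$) or fall back on the cited reference if the clamped domain is the one intended.
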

Similar to Lemma \ref{lemma-measurable}, we have the following observability estimate or propagation of smallness inequality
from measurable sets:
\begin{lemma}
\label{lemma-measurable-rr}
Let $\xi:\R^n(n\geq1)\supset B_{2R}\rightarrow\R$ be an analytic function in $B_{2R}$ satisfying
\begin{equation*}
|\partial_x^a\xi(x)|\leq M (\rho R)^{-|a|}|a|!,\,\,\forall\,x\in B_{2R},\,\,\forall\,a\in\N^n,
\end{equation*}
where $M>0$, $\rho\in(0,1]$. Set $\omega \subset B_{R}$ be a measurable set with positive measure. Then there are $C=C(\frac{|\omega|}{|B_{R}|}, \rho )$ and $\theta=\theta(\frac{|\omega|}{|B_{R}|}, \rho )$, $0 <\theta<1$, such that
\begin{equation}\label{M-1}
\|\xi\|_{L^\infty(B_{R})}\leq CM^{1-\theta} \bigg( \int_{\omega}|\xi(x)|dx \bigg)^{\theta}.
\end{equation}
\end{lemma}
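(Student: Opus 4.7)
The plan is to reduce Lemma \ref{lemma-measurable-rr} to the scale-invariant Lemma \ref{lemma-measurable} by an elementary rescaling. I define $\tilde\xi(y):=\xi(Ry)$ for $y\in B_2$, so that $\tilde\xi$ is analytic on $B_2$. From $\partial_y^a\tilde\xi(y)=R^{|a|}(\partial_x^a\xi)(Ry)$ and the hypothesis on $\xi$, one obtains
\[
|\partial_y^a\tilde\xi(y)| \le R^{|a|}M(\rho R)^{-|a|}|a|! = M\rho^{-|a|}|a|!, \qquad y\in B_2,
\]
so $\tilde\xi$ satisfies the hypothesis of Lemma \ref{lemma-measurable} on the domain $B_2$ with the same $M$ and $\rho$, where now $\rho$ is independent of $R$. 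Setting $\tilde\omega:=\{y:Ry\in\omega\}\subset B_1\subset B_2$, a change of variable yields $|\tilde\omega|=R^{-n}|\omega|$ and $\int_{\tilde\omega}|\tilde\xi(y)|dy=R^{-n}\int_\omega|\xi(x)|dx$, while $\|\xi\|_{L^\infty(B_R)}\le\|\xi\|_{L^\infty(B_{2R})}=\|\tilde\xi\|_{L^\infty(B_2)}$.

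Applying Lemma \ref{lemma-measurable} to $\tilde\xi$ on $B_2$ with measurable subset $\tilde\omega$ then produces constants $\tilde C$ and $\tilde\theta\in(0,1)$ together with the bound
\[
\|\xi\|_{L^\infty(B_R)} \le \tilde C M^{1-\tilde\theta}\bigg(R^{-n}\int_\omega|\xi(x)|dx\bigg)^{\tilde\theta}.
\]
After absorbing the factor $R^{-n\tilde\theta}$ into the multiplicative constant, I set $\theta:=\tilde\theta$ and recover the claimed estimate \eqref{M-1}.

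The main obstacle is to verify that the resulting constants depend only on the ratio $|\omega|/|B_R|$ and on $\rho$, rather than on $|\tilde\omega|$, $|B_2|$, and $R$ separately. This requires the scale-invariant form of the propagation-of-smallness inequality: the Remez-type and three-balls-type arguments underlying Lemma \ref{lemma-measurable} (as developed in \cite{vessella1999continuous,Nadirashvili1976,malinnikova2004propagation} and presented in \cite{apraiz2013null}) naturally produce constants depending on the ratio of measures, not on the absolute measures themselves. Since $|\tilde\omega|/|B_1|=|\omega|/|B_R|$ is invariant under the rescaling, the dependence collapses to $|\omega|/|B_R|$ and $\rho$ as required. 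Thus the proof is essentially a careful bookkeeping of the scaling in an already-established result, with the only subtlety being the tracking of relative versus absolute measures in the constants.
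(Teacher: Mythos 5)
Your rescaling reduction is the natural route, and it is essentially all the paper itself offers: Lemma \ref{lemma-measurable-rr} is stated without proof, as a known scale-invariant variant of Lemma \ref{lemma-measurable} drawn from \cite{apraiz2014observability,apraiz2013null}. Your computation up to the application of Lemma \ref{lemma-measurable} is correct: $\tilde\xi(y)=\xi(Ry)$ satisfies the hypotheses on the fixed domain $B_2$ with the same $M$ and $\rho$, and because the reference domain is now fixed, the constants produced depend only on $|\tilde\omega|=R^{-n}|\omega|=|B_1|\,|\omega|/|B_R|$ and on $\rho$, i.e.\ only on the ratio $|\omega|/|B_R|$ and $\rho$, exactly as you argue.

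The gap is in the last step. You arrive at
\[
\|\xi\|_{L^\infty(B_R)} \le \tilde C M^{1-\tilde\theta} R^{-n\tilde\theta}\Bigl(\int_\omega|\xi(x)|\,dx\Bigr)^{\tilde\theta},
\]
and then ``absorb $R^{-n\tilde\theta}$ into the multiplicative constant.'' But the constant in \eqref{M-1} is required to depend only on $|\omega|/|B_R|$ and $\rho$, while $R^{-n\tilde\theta}$ is unbounded as $R\to 0$; it cannot be absorbed. Indeed \eqref{M-1} as literally written cannot hold with such a constant: take $\xi\equiv 1$, $M=1$, $\omega=B_R$; the left side equals $1$ while the right side is $C|B_R|^{\theta}\to 0$ as $R\to 0$. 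What your computation actually establishes --- and what the cited references state --- is the dimensionally consistent version with the average, since
\[
R^{-n\tilde\theta}\Bigl(\int_\omega|\xi(x)|\,dx\Bigr)^{\tilde\theta} = \Bigl(|B_1|\,\tfrac{|\omega|}{|B_R|}\Bigr)^{\tilde\theta}\Bigl(\tfrac{1}{|\omega|}\int_\omega|\xi(x)|\,dx\Bigr)^{\tilde\theta},
\]
where the prefactor depends only on the ratio and $\tilde\theta$. So your argument is complete once \eqref{M-1} is read with $\frac{1}{|\omega|}\int_\omega|\xi(x)|\,dx$ in place of $\int_\omega|\xi(x)|\,dx$ (this is the form in \cite{apraiz2014observability}, and it suffices for the later applications, where $R$ is fixed by the geometry); as stated, the final absorption step is not valid, and you should either correct the statement to the averaged form or let $C$ depend on $R$ as well.
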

Lemma \ref{lem:fourth-spectral-1} and Lemma \ref{lemma-measurable-rr} imply the following spectral-like condition ${\bf(H)}$ for the following deterministic fourth order parabolic equation.
\begin{equation*}
\left\{
\begin{array}{ll}
\xi_t-A\xi = 0, & \mbox{in}\,\, D\times(0,T),   \\[2mm]
\xi=\Delta \xi= 0, & \mbox{on}\,\, \partial D\times(0,T), \\[2mm]
\xi(0) =\xi_{0}, &  \mbox{in}\,\, D\,.
\end{array}
\right.
\end{equation*}
\begin{lemma}
  \label{lem:fourth-spectral}
  Let $D$ be a bounded domain of $\R^n, n\geq1$, with boundary $\partial D$ of class $C^2$. Let $x_0\in D$ and $R\in(0,1]$ such that $B_{4R}(x_0)\subseteq D$. Suppose $G\subset D$ is a measurable subset with positive measure, contained in $B_{R}(x_0)\subseteq D$. Then there exists a positive constant $C=C(D,R,\frac{|G|}{|B_{R}|})$ such that
\begin{equation}
  \label{eq:fourth-spectral}
\|\cE_\lambda \xi\|_{L^2(D)}
\leq Ce^{C\lambda^{\frac{1}{4}}}\|\cE_\lambda \xi \|_{L^1(G)},\  \forall\, \xi\in
L^2(D), \ \lambda>0.
\end{equation}
\end{lemma}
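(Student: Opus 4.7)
The plan is to mirror the proof of Lemma \ref{lemma-analytic}, replacing the open-set spectral inequality by \eqref{eq:fourth-spectral-1} and the analytic propagation estimate by Lemma \ref{lemma-measurable-rr}. A key simplifying observation is that, under the hinged boundary conditions $v=\Delta v=0$, an $L^2(D)$-orthonormal basis $\{e_j\}_{j\ge 1}$ of eigenfunctions of $-A=\Delta^2$ coincides with a Dirichlet basis of $-\Delta$: if $-\Delta e_j=\mu_j e_j$ with $e_j|_{\partial D}=0$, then $\Delta e_j=-\mu_j e_j$ vanishes on $\partial D$ and $\Delta^2 e_j=\mu_j^2 e_j$, so the eigenvalues of $-A$ satisfy $\lambda_j=\mu_j^2$. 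This identification allows the fourth-order problem to be treated by a second-order harmonic extension rather than a more delicate elliptic construction.

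With this in hand, for $\xi\in L^2(D)$ I would introduce the extension
\[
f(x,\mu)=\sum_{\lambda_j\le\lambda}\langle \xi,e_j\rangle_{L^2(D)}\, e^{\lambda_j^{1/4}\mu}\,e_j(x),\qquad (x,\mu)\in D\times\R,
\]
which satisfies $f(\cdot,0)=\cE_\lambda \xi$ and $(\partial_\mu^2+\Delta)f=0$ in $D\times\R$, with $f=0$ on $\partial D\times\R$. Hence $f$ is harmonic on $D\times\R$, and the classical interior analytic estimate applied on the cylinder $B_{3R}(x_0)\times(-R,R)\subset D\times\R$ (guaranteed by $B_{4R}(x_0)\subseteq D$) produces constants $C=C(R)\ge 1$ and $\rho=\rho(R)\in(0,1]$ such that
\[
\|\partial_x^a \cE_\lambda\xi\|_{L^\infty(B_{2R}(x_0))}\le \frac{C\,|a|!}{(\rho R)^{|a|}}\,\|f\|_{L^2(B_{3R}(x_0)\times(-R,R))},\qquad a\in\N^n.
\]
By $L^2(D)$-orthonormality of $\{e_j\}$ and the elementary bound $e^{\lambda_j^{1/4}\mu}\le e^{\lambda^{1/4}R}$ on $(-R,R)$, the right-hand $L^2$-norm is at most $Ce^{C\lambda^{1/4}}\|\cE_\lambda\xi\|_{L^2(D)}$. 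Thus $\cE_\lambda\xi$ verifies the hypothesis of Lemma \ref{lemma-measurable-rr} on $B_{2R}(x_0)$ with $M=Ce^{C\lambda^{1/4}}\|\cE_\lambda\xi\|_{L^2(D)}$.

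Applying Lemma \ref{lemma-measurable-rr} with $\omega=G\subset B_R(x_0)$ then produces $\theta=\theta(|G|/|B_R|,\rho)\in(0,1)$ with
\[
\|\cE_\lambda\xi\|_{L^\infty(B_R(x_0))}\le Ce^{C\lambda^{1/4}}\,\|\cE_\lambda\xi\|_{L^2(D)}^{1-\theta}\,\|\cE_\lambda\xi\|_{L^1(G)}^{\theta}.
\]
Combining this with $\|\cE_\lambda\xi\|_{L^2(B_R(x_0))}\le|B_R|^{1/2}\|\cE_\lambda\xi\|_{L^\infty(B_R(x_0))}$ and the spectral inequality \eqref{eq:fourth-spectral-1} yields
\[
\|\cE_\lambda\xi\|_{L^2(D)}\le Ce^{C\lambda^{1/4}}\,\|\cE_\lambda\xi\|_{L^2(D)}^{1-\theta}\,\|\cE_\lambda\xi\|_{L^1(G)}^{\theta},
\]
and taking $\theta$-th roots after dividing by $\|\cE_\lambda\xi\|_{L^2(D)}^{1-\theta}$ gives \eqref{eq:fourth-spectral}, absorbing powers of constants into $C$. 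The main obstacle I anticipate lies not in the scheme itself, which is essentially a transcription of the proof of Lemma \ref{lemma-analytic}, but in the eigenfunction identification that converts the biharmonic problem into a harmonic extension; without it one would be forced to work with $\partial_\mu^4-\Delta^2$ or an analogous non-elliptic auxiliary operator, and the Cauchy-type derivative bounds would require a significantly heavier construction.
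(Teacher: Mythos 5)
Your proof is correct and follows exactly the route the paper intends: the paper omits the argument, deferring to Theorem 5 of \cite{apraiz2014observability} together with Lemma \ref{lem:fourth-spectral-1} and Lemma \ref{lemma-measurable-rr}, and your harmonic-extension / interior-analyticity / propagation-of-smallness chain, capped by the spectral inequality on $B_R(x_0)$ and division by $\|\cE_\lambda\xi\|_{L^2(D)}^{1-\theta}$, is precisely that argument. Your identification of the eigenbasis of $\Delta^2$ under the conditions $v=\Delta v=0$ with the Dirichlet basis of $-\Delta$ (so that the extension solves $\partial_\mu^2 f+\Delta f=0$ rather than the non-elliptic $\partial_\mu^4 f-\Delta^2 f=0$) is the right way to make the scheme work and is consistent with the boundary conditions in \eqref{eq:fourth-main}; just note that the paper's stated domain $\mathcal{D}(A)=H^4(D)\cap H_0^2(D)$ (clamped conditions) is inconsistent with those Navier conditions, and under genuinely clamped conditions this identification would fail.
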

\begin{proof}
The proof is very similar to the proof of Theorem 5 in \cite{apraiz2014observability} by using Lemma \ref{lem:fourth-spectral-1} and Lemma \ref{lemma-measurable-rr}. Hence, we omit the details.
\end{proof}

Set $$\tau =\|F\|^2_{L^\infty_{\F}(0,T;\R)}.$$

Similar to the proof of Lemma \ref{decay} and Proposition \ref{interpolation} using Lemma \ref{lem:fourth-spectral}, we have the following decay estimate and interpolation inequality, respectively.
\begin{corollary}\label{co-fourth-decay}
Given any $\eta$ in the space of  $L^2_{\mathcal{F}_T}(\Omega;L^2(D))$,
    we have for each $t\in [0,T]$,
  \begin{equation}
    \label{eq:fourth-decay}
    \E \|z(t ;T,\cE_\lambda^\bot \eta)\|_{L^2(D)}^2 \le e^{(-2\lambda+\tau)(T-t)} \E \|\eta\|_{L^2(D)}^2.
  \end{equation}
\end{corollary}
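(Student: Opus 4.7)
The plan is to recognize that Corollary \ref{co-fourth-decay} is a direct specialization of Lemma \ref{decay} to the concrete setting of the backward stochastic fourth order parabolic equation \eqref{eq:fourth-adjoint}. The abstract framework of Section 2 applies with $H = L^2(D)$ and $A = -\Delta^2$ on the domain $\mathcal{D}(A) = H^4(D) \cap H_0^2(D)$, since $-A = \Delta^2$ equipped with these boundary conditions ($v = \Delta v = 0$ on $\partial D$) is self-adjoint and positive, with a discrete spectrum $0 < \lambda_1 \le \lambda_2 \le \cdots \to \infty$ and an $L^2(D)$-orthonormal basis of eigenfunctions $\{e_j\}_{j \ge 1}$. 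In particular, the spectral projector $\cE_\lambda^\perp$ is well-defined on $L^2(D)$, and writing $z = z(\cdot; T, \cE_\lambda^\perp \eta) = \sum_{\lambda_j > \lambda} z_j e_j$ decouples the backward equation into the scalar equations \eqref{eq:main-1}.

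The next step is to repeat the Itô-based computation in the proof of Lemma \ref{decay} verbatim in this setting. Namely, I would apply Itô's formula to $e^{(2\lambda - \tau)(T-t)} \|z(t;T,\cE_\lambda^\perp \eta)\|_{L^2(D)}^2$, integrate from $t$ to $T$, and take expectation. The stochastic integral term vanishes in expectation. The key algebraic identity is
\[
2 \langle z, A z \rangle_{L^2(D)} \;=\; -2 \sum_{\lambda_j > \lambda} \lambda_j (z^j)^2 \;\le\; -2\lambda \,\|z\|_{L^2(D)}^2,
\]
which holds precisely because $z$ lives in the high-frequency subspace $\mathrm{Ran}(\cE_\lambda^\perp)$; this is the one place where we use that $\eta$ has been projected by $\cE_\lambda^\perp$.

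With this inequality in hand, the remaining terms combine, as in Lemma \ref{decay}, into
\[
\E \int_t^T e^{(2\lambda-\tau)(T-s)} \Big\{\big[\|Z\|_{L^2(D)} - \|F\|_{L^\infty_\F(0,T;\R)} \|z\|_{L^2(D)}\big]^2 + \big[\tau - \|F\|_{L^\infty_\F(0,T;\R)}^2\big]\|z\|_{L^2(D)}^2 \Big\}\,ds,
\]
which is non-negative by the very definition $\tau = \|F\|_{L^\infty_\F(0,T;\R)}^2$. Rearranging the resulting inequality
\[
\E\|\cE_\lambda^\perp \eta\|_{L^2(D)}^2 \;\ge\; e^{(2\lambda - \tau)(T-t)}\, \E \|z(t;T,\cE_\lambda^\perp\eta)\|_{L^2(D)}^2
\]
and bounding $\E\|\cE_\lambda^\perp \eta\|_{L^2(D)}^2 \le \E\|\eta\|_{L^2(D)}^2$ yields \eqref{eq:fourth-decay}.

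There is essentially no new obstacle here: the only thing to check is that $A = -\Delta^2$ with the stated domain fits the abstract hypotheses of Section 2, in particular that $-A$ is a positive self-adjoint operator with compact resolvent (so the spectral decomposition is available), and that the boundary conditions $z = \Delta z = 0$ are preserved by the semigroup so that the integration by parts underlying $\langle z, Az\rangle_{L^2(D)} = -\|\Delta z\|_{L^2(D)}^2 = -\sum_{\lambda_j > \lambda}\lambda_j (z^j)^2$ is justified. Once this is noted, the proof is a word-for-word repetition of Lemma \ref{decay}, so I would simply remark that the argument is identical and omit the calculation.
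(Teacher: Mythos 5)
Your proposal is correct and matches the paper's intent exactly: the paper itself justifies Corollary \ref{co-fourth-decay} only by the remark that it is ``similar to the proof of Lemma \ref{decay}'', and your verification that $-A=\Delta^2$ with domain $H^4(D)\cap H_0^2(D)$ is positive self-adjoint with discrete spectrum, together with the observation that $\langle z,Az\rangle_{L^2(D)}=-\sum_{\lambda_j>\lambda}\lambda_j(z^j)^2\le-\lambda\|z\|_{L^2(D)}^2$ on the range of $\cE_\lambda^\perp$, is precisely what makes the It\^{o} computation of Lemma \ref{decay} carry over verbatim.
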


\begin{corollary}\label{co-fourth-inter}
Given any $\eta\in L^2_{\mathcal{F}_T}(\Omega;L^2(D))$, and $t\in [0,T)$, there exists a constant $K = K(T,D,G)$ such that
\begin{equation}
  \label{eq:fourth-inter}
  \E \|z(t;T,\eta)\|_{L^2(D)}^2 \le Ke^{K(T-t)^{-\frac{1}{3}}} \big(\E \|z(t;T,\eta)\|^2_{L^2(G)}\big)^{\frac{1}{2}}\big(\E \|\eta\|_{L^2(D)}^2\big)^{\frac{1}{2}}.
\end{equation}
\end{corollary}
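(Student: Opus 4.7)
The plan is to mirror the proof of Proposition~\ref{interpolation} verbatim, since Corollary~\ref{co-fourth-inter} is the specialization of the abstract interpolation inequality to the fourth-order stochastic setting, with $H = L^2(D)$, $B^* = \chi_G$, spectral exponent $\gamma = 1/4$ (so that $\gamma/(\gamma-1) = -1/3$, matching the power of $(T-t)$ in \eqref{eq:fourth-inter}), and decay given by Corollary~\ref{co-fourth-decay}.

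The one cosmetic discrepancy is that Lemma~\ref{lem:fourth-spectral} is stated with $\|\cE_\lambda \xi\|_{L^1(G)}$ on the right rather than $\|\cE_\lambda \xi\|_{L^2(G)}$; since $G$ has finite measure, the Cauchy--Schwarz embedding $\|\cdot\|_{L^1(G)} \le |G|^{1/2}\|\cdot\|_{L^2(G)}$ immediately upgrades it to
\[
\|\cE_\lambda \xi\|_{L^2(D)} \le C e^{C\lambda^{1/4}} \|\cE_\lambda \xi\|_{L^2(G)},
\]
which is exactly the spectral-like hypothesis ${\bf(H)}$ with $N = C$ and $\gamma = 1/4$. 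The factor $|G|^{1/2}$ is absorbed into the constant $K = K(T,D,G)$.

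With this in hand, for $z = z(\cdot;T,\eta)$ and any $\lambda > 0$ I would split $z(t) = \cE_\lambda z(t) + \cE_\lambda^\perp z(t)$, apply the upgraded spectral inequality to $\cE_\lambda z(t)$, use $\|\cE_\lambda z(t)\|_{L^2(G)} \le \|z(t)\|_{L^2(G)} + \|\cE_\lambda^\perp z(t)\|_{L^2(D)}$, and bound the high-frequency tail by Corollary~\ref{co-fourth-decay}, which yields $\E \|\cE_\lambda^\perp z(t)\|_{L^2(D)}^2 \le e^{\tau T} e^{-2\lambda(T-t)} \E\|\eta\|_{L^2(D)}^2$. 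Combining these estimates produces
\[
\E\|z(t)\|_{L^2(D)}^2 \le 2 N e^{N\lambda^{1/4}} e^{\tau T}\!\left(e^{\lambda(T-t)}\E\|z(t)\|_{L^2(G)}^2 + e^{-\lambda(T-t)}\E\|\eta\|_{L^2(D)}^2\right),
\]
which is the direct analogue of the displayed inequality in the proof of Proposition~\ref{interpolation}.

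The final step is a routine optimization identical to the abstract case: maximizing $N\lambda^{1/4} - \lambda(T-t)$ over $\lambda > 0$ gives a term of size $K(T-t)^{-1/3}$ in the exponent, so after rewriting the bound in the form
\[
\E\|z(t)\|_{L^2(D)}^2 \le K e^{K(T-t)^{-1/3}}\!\left(\e^{-1}\E\|z(t)\|_{L^2(G)}^2 + \e\, \E\|\eta\|_{L^2(D)}^2\right),\qquad \e > 0,
\]
minimizing in $\e$ yields \eqref{eq:fourth-inter}. No genuine obstacle arises, and the validity of the $\e$-bound for \emph{all} $\e > 0$ (rather than only $\e \in (0,1)$) is justified by the trivial energy estimate $\E\|z(t)\|_{L^2(D)}^2 \le C(T)\E\|\eta\|_{L^2(D)}^2$, exactly as in the abstract proof.
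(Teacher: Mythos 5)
Your proposal is correct and follows essentially the same route as the paper, which simply invokes the argument of Proposition \ref{interpolation} with the spectral inequality of Lemma \ref{lem:fourth-spectral} (exponent $\gamma=\tfrac14$, hence $(T-t)^{\gamma/(\gamma-1)}=(T-t)^{-1/3}$) and the decay estimate of Corollary \ref{co-fourth-decay}. Your explicit Cauchy--Schwarz upgrade of the $L^1(G)$ norm in Lemma \ref{lem:fourth-spectral} to the $L^2(G)$ norm required by condition ${\bf(H)}$ is a detail the paper leaves implicit, and you handle it correctly.
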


Therefore, by employing the techniques outlined in the proof of Theorem \ref{thm:main}, in addition to the telescoping series method, we obtain Theorem \ref{thm:fourth-obs-inq}.

\subsection{Stochastic heat equation}
In this subsection, we discuss the controllability for a stochastic heat equation.

Let $D$ be a bounded domain of $\R^d, d\geq1$, with boundary $\partial D$ of class $C^2$. The spaces $H=U=L^2(D)$, $F(t)\in L^\infty_{\F}(0,T;\R)$. Let $x_0\in D$ and $R\in(0,1]$ such that $B_{4R}(x_0)\subseteq D$. Set observation operator $B=\chi_G$, denotes the characteristic function of $G$, where $G\subset D$ is a measurable subset with positive measure, contained in $B_{R}(x_0)\subseteq D$.

Let $A$ be an unbounded linear operator on $L^2(D)$:
$$
\left\{
\begin{array}{ll}
\mathcal{D}(A):=H^2(D)\cap H_0^1(D),\\
Av :=  \Delta v,\ \forall v\in\mathcal{D}(A).
\end{array}
\right.
$$

Consider initial state $y_0 \in L^2_{\mathcal{F}_0}(\Omega;L^2(D))$, $y$ is the $L^2(D)$-valued state variable, $u$ is the control variable and $u\in L^\infty_\F(0,T;L^2(\Omega; L^2(D)))$. The set $E$ is a measurable subset of $[0, T ]$ with positive measure.

In present subsection, we consider the following forward stochastic heat equation:
\begin{equation}
\label{eq:heat-main}
\left\{
\begin{array}{ll}
d y = A ydt + \chi_E\chi_G u dt + F(t) y dW(t), & \mbox{in}\,\, D\times(0,T),   \\[2mm]
y = 0, & \mbox{on}\,\, \partial D\times(0,T), \\[2mm]
y(0) =y_{0}, &  \mbox{in}\,\, D\,.
\end{array}
\right.
\end{equation}

By the classical well-posedness result for stochastic evolution equations, we know that equation (\ref{eq:heat-main}) admits a unique weak solution $y\in L^2_{\mathbb{F}}(\Omega;C([0,T];L^2(D)))\cap L^2_\F(0,T; H_0^1(D))$.

The main result concerns the null controllability property of forward stochastic heat equations (\ref{eq:heat-main}) as follows:
\begin{theorem}
  \label{thm:heat-main}
The equation \eqref{eq:heat-main} is null controllable. That is, for each initial data $y_0\in L^2_{\mathcal{F}_0}(\Omega;L^2(D))$, there is a control $u\in L^\infty_\F(0,T;L^2(\Omega; L^2(D)))$ such that
the solution $y$ of the stochastic heat equations \eqref{eq:heat-main} satisfies $y(T) = 0$
in $D$, $\bP$-a.s. Moreover, there exists a constant $C=C(T,D,|E|,|G|,R)>0$  and the control $u$ satisfies the following
estimate
\begin{equation}
  \label{eq:heat-control est}
  \|u\|_{L^\infty_\F(0,T;L^2(\Omega; L^2(D)))} \le C  \|y_0\|_{L^2_{\mathcal{F}_0}(\Omega;L^2(D))}.
\end{equation}
\end{theorem}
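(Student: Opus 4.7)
The plan is to mirror the strategy used for the fourth-order case in Subsection 3.2: introduce the backward adjoint stochastic heat equation, reduce null controllability to an observability inequality via the standard duality argument of \cite{lv2021mathematical}, then verify the spectral-like condition ${\bf(H)}$ so that Theorem \ref{thm:main} applies directly. Concretely, let $(z,Z)$ solve
\begin{equation*}
\left\{
\begin{array}{ll}
d z = -\Delta z\,dt - F(t)Z\,dt + Z\,dW(t), & \mbox{in } D\times(0,T),\\[2mm]
z = 0, & \mbox{on } \partial D\times(0,T),\\[2mm]
z(T)=\eta, & \mbox{in } D,
\end{array}
\right.
\end{equation*}
with terminal data $\eta\in L^2_{\mathcal{F}_T}(\Omega;L^2(D))$. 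By the duality result Theorem 7.17 in \cite{lv2021mathematical}, it suffices to establish the observability inequality
\begin{equation*}
\|z(0;T,\eta)\|_{L^2_{\mathcal{F}_0}(\Omega;L^2(D))}
\le C\|\chi_E\chi_G z(\cdot;T,\eta)\|_{L^1_\F(0,T;L^2(\Omega;L^2(D)))},
\end{equation*}
from which \eqref{eq:heat-control est} and the null controllability of \eqref{eq:heat-main} follow.

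To apply Theorem \ref{thm:main} with $B=B^*=\chi_G$, I would verify ${\bf(H)}$ for the Dirichlet Laplacian on $D$ with exponent $\gamma=1/2$. First I would invoke the classical Lebeau--Robbiano spectral inequality (in the form of Lemma \ref{lem:fourth-spectral-1}, but for $-\Delta$ rather than $\Delta^2$) to obtain, for some $C=C(D,R)$,
\begin{equation*}
\|\cE_\lambda\xi\|_{L^2(D)}\le Ce^{C\lambda^{1/2}}\|\cE_\lambda\xi\|_{L^2(B_R(x_0))},\quad \forall\,\xi\in L^2(D),\ \lambda>0.
\end{equation*}
Next, exactly as in the proof of Lemma \ref{lemma-analytic} (or Theorem 5 of \cite{apraiz2014observability}), I would lift the finite sum $\cE_\lambda\xi$ to a harmonic extension $f(x,\mu)=\sum_{\lambda_j\le\lambda}\langle\xi,e_j\rangle e^{\sqrt{\lambda_j}\mu}e_j(x)$ on $D\times\R$, use interior real-analytic estimates on $B_{2R}(x_0)\times D_R(0)$ to bound all derivatives of $\cE_\lambda\xi$ by $Ce^{C\lambda^{1/2}}\|\cE_\lambda\xi\|_{L^2(D)}$, and then apply the propagation-of-smallness estimate for measurable sets from Lemma \ref{lemma-measurable-rr}. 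Combining with the ball-observation inequality above yields
\begin{equation*}
\|\cE_\lambda\xi\|_{L^2(D)}\le Ce^{C\lambda^{1/2}}\|\cE_\lambda\xi\|_{L^1(G)}\le Ce^{C\lambda^{1/2}}\|\chi_G\cE_\lambda\xi\|_{L^2(D)},\quad \forall\,\xi\in L^2(D),\ \lambda>0,
\end{equation*}
which is precisely condition ${\bf(H)}$ with $\gamma=1/2$.

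Once ${\bf(H)}$ is established, the decay estimate of Lemma \ref{decay} and the interpolation inequality of Proposition \ref{interpolation} go through verbatim with $H=U=L^2(D)$, and the telescoping-series argument in the proof of Theorem \ref{thm:main} immediately delivers the required observability inequality; duality then produces the control $u$ with the norm bound \eqref{eq:heat-control est}. The only non-trivial step is the verification of ${\bf(H)}$ on the measurable subset $G$, but this is structurally identical to Lemma \ref{lem:fourth-spectral}, with the order of the operator reduced from $4$ to $2$, so the exponent in the spectral estimate improves from $\lambda^{1/4}$ to $\lambda^{1/2}$ and no genuinely new analytic input is needed. Aside from this, the entire argument is a direct transcription of Subsection 3.2 with $-\Delta^2$ replaced by $\Delta$.
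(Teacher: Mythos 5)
Your proposal is correct and follows essentially the same route as the paper: reduce to the observability inequality for the backward adjoint equation via Theorem 7.17 of \cite{lv2021mathematical}, verify the spectral-like condition ${\bf(H)}$ with $\gamma=\tfrac12$ for the Dirichlet Laplacian over the measurable set $G$, and then run the decay estimate, interpolation inequality, and telescoping-series argument of Theorem \ref{thm:main}. The only difference is cosmetic: you re-derive the spectral inequality on $G$ via the harmonic lifting, interior analyticity estimates, and propagation of smallness, whereas the paper simply cites it directly from Theorem 5 of \cite{apraiz2014observability} (Lemma \ref{lem:heat-spectral}).
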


Similar discussion to Subsection \ref{subsec:degenerate}, we introduce the following backward stochastic equation:
\begin{equation}
\label{eq:heat-adjoint}
\left\{
\begin{array}{ll}
d z = -A zdt -F(t)Z dt + Z dW(t), & \mbox{in}\,\, D\times(0,T),   \\[2mm]
z = 0, & \mbox{on}\,\, \partial D\times(0,T), \\[2mm]
z(T) =\eta, &  \mbox{in}\,\, D\,,
\end{array}
\right.
\end{equation}
where $\eta\in L^2_{\mathcal{F}_T}(\Omega;L^2(D))$.

By the standard duality argument, see Theorem 7.17 in \cite{lv2021mathematical}, Theorem \ref{thm:heat-main} is deduced by the following observability inequality for
the backward stochastic  equation (\ref{eq:heat-adjoint}):
\begin{theorem}
  \label{thm:heat-obs-inq}
There exists a constant $C=C(T,D,|E|,|G|,R)>0$ such that the following observability inequality holds: for any $\eta\in L^2_{\mathcal{F}_T}(\Omega;L^2(D))$,
\begin{equation}
  \label{eq:heat-obs-inq}
  \|z(0;T,\eta)\|_{L^2_{\mathcal{F}_0}(\Omega;L^2(D))} \le C \|\chi_E\chi_Gz(\cdot;T,\eta)\|_{L^1_\F(0,T;L^2(\Omega; L^2(D)))}.
\end{equation}
\end{theorem}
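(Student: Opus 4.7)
The plan is to cast Theorem \ref{thm:heat-obs-inq} directly into the abstract framework of Theorem \ref{thm:main}, with $H = U = L^2(D)$, observation operator $B = \chi_G$, and $A = \Delta$ with domain $H^2(D) \cap H_0^1(D)$. Since Theorem \ref{thm:main} supplies the observability inequality once the spectral-like condition ${\bf(H)}$ is verified, the entire task reduces to establishing
\[
  \|\cE_\lambda f\|_{L^2(D)} \leq C e^{C\lambda^{1/2}} \|\chi_G \cE_\lambda f\|_{L^2(D)}, \qquad \forall\, f \in L^2(D),\ \lambda > 0,
\]
which is condition ${\bf(H)}$ with $\gamma = \tfrac{1}{2}$ and $N = C$ for some $C$ depending on $D$, $R$, and $|G|/|B_R|$.

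For this spectral inequality I would proceed exactly as in the proof sketch for Lemma \ref{lem:fourth-spectral} in the fourth-order case. First, invoke the classical Lebeau--Robbiano spectral inequality for the Dirichlet Laplacian (see \cite{apraiz2013null, buffe2018spectral, le2019spectral}) on the open ball $B_R(x_0)$:
\[
  \|\cE_\lambda f\|_{L^2(D)} \leq C e^{C\lambda^{1/2}} \|\cE_\lambda f\|_{L^2(B_R(x_0))}.
\]
Next, since $\cE_\lambda f$ is a finite linear combination of Laplacian eigenfunctions, it is real-analytic in $D$, and one obtains uniform Cauchy-type bounds on derivatives in $B_{2R}(x_0)$ of the form $|\partial^a_x \cE_\lambda f(x)| \leq M (\rho R)^{-|a|} |a|!$ with $M = C e^{C\lambda^{1/2}} \|\cE_\lambda f\|_{L^2(D)}$, by a standard elliptic-regularity/Cauchy-estimate argument applied to the harmonic extension $g(x,\mu) = \sum_{\lambda_j \leq \lambda} \langle f,e_j\rangle e^{\sqrt{\lambda_j}\mu} e_j(x)$ (the same trick used in Lemma \ref{lemma-analytic}). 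Then apply the propagation of smallness from measurable sets (Lemma \ref{lemma-measurable-rr}) with $\omega = G \subset B_R(x_0)$ to upgrade the $L^2(B_R(x_0))$-norm on the right-hand side to an $L^1(G)$-norm, yielding ${\bf(H)}$.

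With ${\bf(H)}$ in hand, the decay estimate of Lemma \ref{decay} gives $\E\|z(t;T,\cE_\lambda^\perp \eta)\|_{L^2(D)}^2 \leq e^{(-2\lambda + \tau)(T-t)} \E \|\eta\|_{L^2(D)}^2$ for this system verbatim (the derivation only uses selfadjointness of $-A$ on the spectral decomposition and the $L^\infty$ bound on $F$). The interpolation inequality of Proposition \ref{interpolation} then produces, for $t \in [0,T)$,
\[
  \E \|z(t;T,\eta)\|_{L^2(D)}^2 \leq K e^{K(T-t)^{-1}} \bigl(\E \|\chi_G z(t;T,\eta)\|_{L^2(D)}^2\bigr)^{1/2} \bigl(\E \|\eta\|_{L^2(D)}^2\bigr)^{1/2},
\]
with the exponent $(T-t)^{-\gamma/(1-\gamma)} = (T-t)^{-1}$ coming from $\gamma = 1/2$. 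The telescoping series argument of the proof of Theorem \ref{thm:main}, applied to any Lebesgue point $\ell$ of $E$ with the same construction of $\{\ell_n\}$, $\{\tau_n\}$ and the same choice of ratio $q$, then delivers
\[
  \E \|z(\ell_1)\|_{L^2(D)}^2 \leq C e^{(C+\frac{1}{2})(\ell_2-\ell_1)^{-1}} \left(\int_{\ell_1}^\ell \chi_E \bigl(\E \|\chi_G z(t)\|_{L^2(D)}^2\bigr)^{1/2} dt\right)^2,
\]
and the standard backward energy estimate $\E\|z(0)\|_{L^2(D)}^2 \leq C(T) \E\|z(\ell_1)\|_{L^2(D)}^2$ finishes the proof of \eqref{eq:heat-obs-inq}.

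The only nontrivial step is the verification of ${\bf(H)}$, i.e., upgrading the Lebeau--Robbiano inequality from an open ball to the measurable subset $G$; however this is by now a textbook argument via real-analyticity of spectral projections and the Vessella--Nadirashvili propagation of smallness (Lemma \ref{lemma-measurable-rr}), and is the precise analogue of the argument already carried out in Lemma \ref{lemma-analytic} (minus the degeneracy complications) and in Lemma \ref{lem:fourth-spectral}. Once ${\bf(H)}$ is in place, the observability inequality follows as an immediate corollary of Theorem \ref{thm:main}, so no new obstacles arise in the stochastic/backward aspects of the problem.
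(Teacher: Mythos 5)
Your proposal is correct and follows essentially the same route as the paper: verify the spectral-like condition ${\bf(H)}$ with $\gamma=\tfrac12$ for the Dirichlet Laplacian on a measurable set $G\subset B_R(x_0)$, then run the decay estimate, interpolation inequality, and telescoping series argument of Theorem \ref{thm:main}. The only cosmetic difference is that the paper imports the measurable-set spectral inequality directly as Lemma \ref{lem:heat-spectral} (citing Theorem 5 of \cite{apraiz2014observability}), whereas you sketch its derivation via Lebeau--Robbiano on a ball plus real-analyticity and propagation of smallness --- which is precisely the argument behind that cited result and the one the paper itself carries out in Lemmas \ref{lemma-analytic} and \ref{lem:fourth-spectral}.
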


From [\cite{apraiz2014observability}, Theorem
5], also see Lemma 2.1 in \cite{yang2016observability}, we have the following spectral-like condition ${\bf(H)}$:
\begin{lemma}
  \label{lem:heat-spectral}
  Let $D$ be a bounded domain of $\R^d, d\geq1$, with boundary $\partial D$ of class $C^2$. Let $x_0\in D$ and $R\in(0,1]$ such that $B_{4R}(x_0)\subseteq D$. Suppose $G\subset D$ is a measurable subset with positive measure, contained in $B_{R}(x_0)\subseteq D$. Then there exists a positive constant $C=C(D,R,|G|)$ such that
\begin{equation}
  \label{eq:heat-spectral}
\|\cE_\lambda \xi\|_{L^2(D)}^2
\leq Ce^{C\lambda^{\frac{1}{2}}}\|\cE_\lambda \xi \|^2_{L^2(G)},\  \forall\, \xi\in
L^2(D), \ \lambda>0.
\end{equation}
\end{lemma}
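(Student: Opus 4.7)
The plan is to derive the measurable-set spectral inequality \eqref{eq:heat-spectral} by combining the classical Lebeau--Robbiano spectral inequality on open subsets with a Vessella-type propagation-of-smallness argument that exploits the analyticity of spectral projections of the Dirichlet Laplacian. This mirrors exactly the strategy already used in the proof of Lemma \ref{lemma-analytic} in the degenerate setting, but is now easier because there is no degenerate coefficient to worry about.

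First, I would invoke the classical Lebeau--Robbiano inequality for the Dirichlet Laplacian: for every open ball $\omega\subset D$ there exists $C>0$ such that $\|\cE_\lambda\xi\|_{L^2(D)}\le Ce^{C\sqrt{\lambda}}\|\cE_\lambda\xi\|_{L^2(\omega)}$ for all $\xi\in L^2(D)$ and $\lambda>0$. Taking $\omega=B_{R}(x_0)$ reduces the problem to controlling $\|\cE_\lambda\xi\|_{L^2(B_R(x_0))}$ by a quantity that sees only the measurable subset $G$.

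Second, to pass from the ball $B_{R}(x_0)$ to $G$, I would upgrade $\cE_\lambda\xi$ to a quantitatively real-analytic function via the harmonic extension
$$
f(x,\mu)=\sum_{\lambda_j\le\lambda}\langle\xi,e_j\rangle_{L^2(D)}\cosh\!\bigl(\sqrt{\lambda_j}\,\mu\bigr)e_j(x),
$$
which satisfies $\partial_\mu^2 f+\Delta_x f=0$ on $D\times\R$ with $f(\cdot,0)=\cE_\lambda\xi$. Interior analytic estimates for the Laplace equation on the cylinder $B_{2R}(x_0)\times(-1,1)$ (whose closure stays well inside $D\times\R$ thanks to the assumption $B_{4R}(x_0)\subset D$) give Cauchy bounds
$$
|\partial_x^a(\cE_\lambda\xi)(x)|\le M(\rho R)^{-|a|}|a|!,\qquad x\in B_{2R}(x_0),\ a\in\N^d,
$$
with $M\le Ce^{C\sqrt{\lambda}}\|\cE_\lambda\xi\|_{L^2(D)}$ thanks to the orthonormality of $\{e_j\}$ in $L^2(D)$ and the bound $\cosh(\sqrt{\lambda_j}\mu)\le e^{\sqrt{\lambda}}$ for $|\mu|\le 1$. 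Then Lemma \ref{lemma-measurable-rr}, applied to $\cE_\lambda\xi$ on $B_{2R}(x_0)$ with the measurable subset $G\subset B_R(x_0)$, produces $\theta\in(0,1)$ depending on $|G|/|B_R|$ and $\rho$ with
$$
\|\cE_\lambda\xi\|_{L^\infty(B_R(x_0))}\le CM^{1-\theta}\|\cE_\lambda\xi\|_{L^1(G)}^{\theta}.
$$

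Third, combining the last inequality with the first step and using $\|\cE_\lambda\xi\|_{L^2(B_R(x_0))}\le |B_R|^{1/2}\|\cE_\lambda\xi\|_{L^\infty(B_R(x_0))}$ as well as $\|\cE_\lambda\xi\|_{L^1(G)}\le|G|^{1/2}\|\cE_\lambda\xi\|_{L^2(G)}$, one obtains an inequality of the form
$$
\|\cE_\lambda\xi\|_{L^2(D)}\le Ce^{C\sqrt{\lambda}}\|\cE_\lambda\xi\|_{L^2(D)}^{1-\theta}\|\cE_\lambda\xi\|_{L^2(G)}^{\theta},
$$
which, after dividing by $\|\cE_\lambda\xi\|_{L^2(D)}^{1-\theta}$, raising to power $1/\theta$, and squaring, yields precisely \eqref{eq:heat-spectral}, with a new constant that absorbs $1/\theta$ into the exponent. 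The main obstacle is the quantitative analyticity step: one must track how the radius of analyticity and the majorant $M$ depend on $\lambda$ so that the final exponential weight remains of the order $e^{C\lambda^{1/2}}$ matching the Lebeau--Robbiano exponent, rather than degrading to a worse power of $\lambda$; once this is in hand, the rest of the argument is a short interpolation computation.
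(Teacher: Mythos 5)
Your proposal is correct and follows essentially the same route as the source the paper relies on: the paper does not reprove this lemma but cites it directly from Theorem 5 of \cite{apraiz2014observability} (and Lemma 2.1 of \cite{yang2016observability}), whose proof is exactly your combination of the Lebeau--Robbiano inequality on $B_R(x_0)$, quantitative analyticity of $\cE_\lambda\xi$ via an elliptic extension in an added variable, and the propagation-of-smallness estimate of Lemma \ref{lemma-measurable-rr}. This is also precisely the template the paper executes in detail for the degenerate case in Lemma \ref{lemma-analytic} (there with the extension $e^{\lambda_j^\sigma\mu}$ in place of your $\cosh(\sqrt{\lambda_j}\,\mu)$, an immaterial difference), so no further comparison is needed.
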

Set $$\tau =\|F\|^2_{L^\infty_{\F}(0,T;\R)}.$$

Similar to the proof of Lemma \ref{decay} and Proposition \ref{interpolation} using Lemma \ref{lem:heat-spectral}, we have the following decay estimate and interpolation inequality, respectively.
\begin{corollary}\label{co-heat-decay}
Given any $\eta$ in the space of  $L^2_{\mathcal{F}_T}(\Omega;L^2(D))$,
    we have for each $t\in [0,T]$,
  \begin{equation}
    \label{eq:heat-decay}
    \E \|z(t ;T,\cE_\lambda^\bot \eta)\|_{L^2(D)}^2 \le e^{(-2\lambda+\tau)(T-t)} \E \|\eta\|_{L^2(D)}^2.
  \end{equation}
\end{corollary}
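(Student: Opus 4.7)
The plan is to view Corollary \ref{co-heat-decay} as a direct specialization of the abstract Lemma \ref{decay}. In the setting of the backward stochastic heat equation \eqref{eq:heat-adjoint} I take $H = L^2(D)$ and let $A$ be the Dirichlet Laplacian on $D$. Since $-A$ is self-adjoint, nonnegative, and has compact resolvent on $L^2(D)$, the Dirichlet eigenvalues $0 < \lambda_1 \le \lambda_2 \le \cdots \to \infty$ together with their $L^2$-normalized eigenfunctions $\{e_j\}_{j\ge 1}$ supply exactly the spectral data used to define $\cE_\lambda$ and $\cE_\lambda^\bot$ in Section~2. Equation \eqref{eq:heat-adjoint} is then a particular instance of the abstract equation \eqref{eq:main}, and the projection identity \eqref{eq:2} applies verbatim, so Lemma \ref{decay} itself already yields the conclusion.

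For completeness, the proof can be repeated in the present concrete setting by mimicking Lemma \ref{decay}. First I would apply the \Ito formula to the process $e^{(2\lambda-\tau)(T-t)}\|z(t;T,\cE_\lambda^\bot\eta)\|_{L^2(D)}^2$ on $(t,T)$. This produces (i) the terminal value $\|\cE_\lambda^\bot\eta\|_{L^2(D)}^2$; (ii) a drift term $-2\langle z,Az\rangle_{L^2(D)}$ from the backward dynamics; (iii) the cross term $-2F(s)\langle Z,z\rangle_{L^2(D)}$; (iv) the quadratic-variation contribution $\|Z\|_{L^2(D)}^2$; (v) the weight-derivative term $-(2\lambda-\tau)\|z\|_{L^2(D)}^2$; and (vi) a stochastic integral against $W$. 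Taking expectation eliminates the martingale term.

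The main algebraic step is the spectral cancellation. Since $z(s;T,\cE_\lambda^\bot\eta) = \cE_\lambda^\bot z(s;T,\eta)$, expanding in the eigenbasis gives
\[
2\langle z,Az\rangle_{L^2(D)} = -2\sum_{\lambda_j>\lambda}\lambda_j (z^j)^2 \le -2\lambda\|z\|_{L^2(D)}^2,
\]
which exactly absorbs the $-2\lambda$ piece of the weight-derivative term. The remaining integrand is treated by completing the square:
\[
\|Z\|_{L^2(D)}^2 - 2F(s)\langle Z,z\rangle_{L^2(D)} + \tau\|z\|_{L^2(D)}^2 \ge \bigl(\|Z\|_{L^2(D)} - |F(s)|\|z\|_{L^2(D)}\bigr)^2 \ge 0,
\]
using $\tau = \|F\|_{L^\infty_\F(0,T;\R)}^2$. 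Rearranging yields \eqref{eq:heat-decay}.

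There is essentially no obstacle here. The only point worth checking is that the Dirichlet Laplacian fits into the Section~2 framework, which is standard: $A$ generates a $C_0$-semigroup on $L^2(D)$ and has a discrete spectrum with an orthonormal eigenbasis. Once this identification is made, Lemma \ref{decay} delivers the decay estimate with no further work.
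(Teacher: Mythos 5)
Your proposal is correct and follows exactly the paper's route: the paper itself proves this corollary by remarking that it is ``similar to the proof of Lemma \ref{decay}'', i.e.\ the abstract decay estimate specialized to $H=L^2(D)$ with $A$ the Dirichlet Laplacian, which is precisely your identification. The Itô computation, spectral cancellation on $\cE_\lambda^\bot$, and completion of the square with $\tau=\|F\|^2_{L^\infty_\F(0,T;\R)}$ all match the paper's argument.
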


\begin{corollary}\label{co-heat-inter}
Given any $\eta\in L^2_{\mathcal{F}_T}(\Omega;L^2(D))$, and $t\in [0,T)$, there exists a constant $K = K(T,D,|G|,R)$ such that
\begin{equation}
  \label{eq:heat-inter}
  \E \|z(t;T,\eta)\|_{L^2(D)}^2 \le Ke^{K(T-t)^{-1}} \big(\E \|z(t;T,\eta)\|^2_{L^2(G)}\big)^{\frac{1}{2}}\big(\E \|\eta\|_{L^2(D)}^2\big)^{\frac{1}{2}}.
\end{equation}
\end{corollary}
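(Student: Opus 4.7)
The plan is to follow the argument used to prove Proposition \ref{interpolation}, specializing the spectral-like hypothesis ${\bf(H)}$ to the heat-equation spectral inequality from Lemma \ref{lem:heat-spectral}, which corresponds to $\gamma=\tfrac{1}{2}$ and to the observation operator $B=B^{*}=\chi_{G}$, so that $\|B^{*}\cE_\lambda z(t)\|_{U}^{2}=\|\cE_\lambda z(t)\|_{L^{2}(G)}^{2}$. I would also use the already established Corollary \ref{co-heat-decay} as the high-frequency decay ingredient.

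First, I would fix $\eta\in L^{2}_{\cF_{T}}(\Omega;L^{2}(D))$, abbreviate $z=z(\cdot;T,\eta)$, and split $z(t)=\cE_\lambda z(t)+\cE_\lambda^{\perp}z(t)$. The triangle inequality gives $\E\|z(t)\|_{L^{2}(D)}^{2}\le 2\E\|\cE_\lambda z(t)\|_{L^{2}(D)}^{2}+2\E\|\cE_\lambda^{\perp}z(t)\|_{L^{2}(D)}^{2}$. Applying Lemma \ref{lem:heat-spectral} pointwise in $\omega$ and taking expectation controls the low-frequency piece by $Ce^{C\lambda^{1/2}}\E\|\cE_\lambda z(t)\|_{L^{2}(G)}^{2}$, which via the triangle inequality and the crude bound $\|\cE_\lambda^{\perp} z(t)\|_{L^{2}(G)}\le\|\cE_\lambda^{\perp} z(t)\|_{L^{2}(D)}$ is dominated by $2Ce^{C\lambda^{1/2}}\bigl(\E\|z(t)\|_{L^{2}(G)}^{2}+\E\|\cE_\lambda^{\perp}z(t)\|_{L^{2}(D)}^{2}\bigr)$.

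Next, I would insert the decay estimate \eqref{eq:heat-decay} to bound $\E\|\cE_\lambda^{\perp}z(t)\|_{L^{2}(D)}^{2}\le e^{(-2\lambda+\tau)(T-t)}\E\|\eta\|_{L^{2}(D)}^{2}$ and collect terms to obtain
\[
\E\|z(t)\|_{L^{2}(D)}^{2}\le Ce^{C\lambda^{1/2}}e^{\tau T}\bigl(e^{\lambda(T-t)}\E\|z(t)\|_{L^{2}(G)}^{2}+e^{-\lambda(T-t)}\E\|\eta\|_{L^{2}(D)}^{2}\bigr),
\]
exactly as in the derivation leading to inequality \eqref{eq:epsilon inq}. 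The Young-type elementary bound $\max_{\lambda>0}\{C\lambda^{1/2}-\lambda(T-t)\}\le \tilde{C}(T-t)^{-1}$ — this is the place where the exponent $\gamma/(1-\gamma)=1$ materializes — then produces an $\varepsilon$-inequality of the form
\[
\E\|z(t)\|_{L^{2}(D)}^{2}\le Ke^{K(T-t)^{-1}}\bigl(\varepsilon^{-1}\E\|z(t)\|_{L^{2}(G)}^{2}+\varepsilon\,\E\|\eta\|_{L^{2}(D)}^{2}\bigr),
\]
initially for $\varepsilon\in(0,1)$ and then for all $\varepsilon>0$ via the trivial a priori bound $\E\|z(t)\|_{L^{2}(D)}^{2}\le C(T)\E\|\eta\|_{L^{2}(D)}^{2}$. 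Finally, minimizing in $\varepsilon$ supplies the geometric-mean on the right-hand side and yields \eqref{eq:heat-inter}.

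No genuine obstacle is expected: the argument is essentially a line-by-line specialization of Proposition \ref{interpolation}, and the only mild verification is that the heat-equation spectral inequality in Lemma \ref{lem:heat-spectral} is exactly the instance of condition ${\bf(H)}$ with $\gamma=\tfrac12$ and $B^{*}=\chi_{G}$, which cleanly slots into the same chain of inequalities. The $(T-t)^{-1}$ blow-up in the exponential is forced by the parabolic spectral exponent $\tfrac12$ through $\gamma/(1-\gamma)$, in contrast to the $(T-t)^{-\sigma/(1-\sigma)}$ factor in the degenerate case and the $(T-t)^{-1/3}$ factor in the fourth-order case.
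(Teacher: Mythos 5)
Your proposal is correct and is exactly the route the paper takes: the paper offers no separate proof of Corollary \ref{co-heat-inter}, stating only that it follows from the argument of Proposition \ref{interpolation} combined with Lemma \ref{lem:heat-spectral} and Corollary \ref{co-heat-decay}, which is precisely the specialization ($\gamma=\tfrac12$, $B^*=\chi_G$, exponent $\gamma/(1-\gamma)=1$) that you carry out. No gaps; the verification that Lemma \ref{lem:heat-spectral} instantiates condition ${\bf(H)}$ is as immediate as you claim.
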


Therefore, by employing the techniques outlined in Theorem \ref{thm:main}, in addition to the telescoping series method, we obtain Theorem \ref{thm:heat-obs-inq}.

\section*{Acknowledgments}
This work is supported by the National Natural Science Foundation of China, the Science Technology Foundation of Hunan Province.

\bibliographystyle{abbrvnat}
\bibliography{ref.bib}

\end{document}